\documentclass{amsart}

\usepackage{geometry,graphicx,amssymb,amsmath,amsbsy,eucal,amsfonts,mathrsfs,amscd,bm,color}
\usepackage{cite}
\usepackage[all]{xy}

\geometry{
    letterpaper,
    left=   1.25in,
    right=  1.25in,
    top=    1.5in,
    bottom= 1.5in
}
\linespread{1.}
\numberwithin{equation}{section}

\allowdisplaybreaks[4]

\newtheorem{theorem}{Theorem}[section]
\newtheorem{lemma}[theorem]{Lemma}
\newtheorem{corollary}[theorem]{Corollary}
\newtheorem{proposition}[theorem]{Proposition}
\theoremstyle{definition}

\theoremstyle{remark}

\newcommand{\revj}[1]{{{#1}}}

\newcommand{\dive}{{\ensuremath\mathop{\mathrm{div}\,}}}

\newcommand{\RRR}{\mathbb{R}}

\newcommand{\pol}{\EuScript{P}}

\newcommand{\bld}[1]{\boldsymbol{#1}}

\newcommand{\bv}{\bld{v}}

\newcommand{\bn}{\bld{n}}

\newcommand{\bV}{\bld{V}}

\newcommand{\bq}{\bld{q}}

\newcommand{\bsigma}{\bld{\sigma}}

\newcommand{\bx}{\bld{x}}

\newcommand{\Om}{\Omega}

\newcommand{\Oh}{{\mathcal{T}_h}}

\subjclass[2010]{Primary }

\title[Discrete extension operators]{Discrete Extension Operators for 
Mixed finite element spaces on locally refined meshes}
\author{Mark Ainsworth}
\email{mark\_ainsworth@brown.edu}
\address{Divison of Applied Mathematics, Brown University, Providence, RI}
\author{Johnny Guzm\'an}
\email{johnny\_guzman@brown.edu}
\address{Division of Applied Mathematics, Brown University, Providence, RI}
\author{Francisco-Javier Sayas}
\email{fjsayas@udel.edu}
\address{Department of Mathematical Sciences, University of Delaware, Newark,DE}
\thanks{Partial support for MA under AFOSR contract FA9550-12-1-0399 is
gratefully acknowledged. FJS was partially funded by the NSF grant DMS 1216356}\date{\today}

\keywords{finite elements, Stokes, conforming, divergence-free}
 
\subjclass{76M10,65N30,65N12}
 
\begin{document}
\maketitle

\begin{abstract}
The existence of uniformly bounded discrete extension operators is established
for conforming Raviart-Thomas and N\'edelec discretisations of $H(div)$ and
$H(curl)$ on locally refined partitions of a polyhedral domain into tetrahedra.
\end{abstract} 

\section{Introduction}
Many boundary value problems with non-homogeneous boundary data may be
cast in the abstract variational form: find $u\in X$ such that $\gamma u=g\in M$
and 
\begin{equation}\label{continuousB} 
 B(u,v) = F(v)\quad\forall v\in X_0
\end{equation}
where $X$ is a Hilbert space over a domain $\Omega$, $M$ is a Hilbert space
over the boundary $\Gamma$ of $\Omega$, and $\gamma:X\to M$ is a trace
operator with $X_0=\{v\in X:\gamma v=0\}$. We assume that the bilinear and
linear forms $B:X\times X\to\RRR$ and $F:X\to\RRR$ satisfy suitable conditions \revj{(e.g. inf-sup stability and continuity)}
for the problem to be well-posed; specific examples will be given later. 

A Galerkin finite element approximation $u_h\approx u$ is obtained by
selecting a finite dimensional subspace $X_h\subset X$, setting
$M_h=\gamma X_h$, constructing a suitable approximation $g_h\approx g$
of the non-homogeneous boundary data, and seeking $u_h\in X_h$ such that
$\gamma u_h=g_h\in M_h$ and 
\begin{equation}\label{discreteB}
 B(u_h, v_h) = F(v_h)\quad\forall v_h\in X_{h0}=\{v_h\in X_h: \gamma v_h=0\}.
\end{equation}
The discrete problem~\eqref{discreteB} is well-posed provided that there exists
a positive constant $\beta>0$ such that  
\begin{equation}\label{disinfsup}
\sup_{v_h\in X_{\revj{h0}}, \|v_h\|_X=1} B(w_h,v_h)\ge\beta\|w_h\|_X \quad \text{
for all } w_h \in X_{\revj{h0}}.
\end{equation}
The issue of the accuracy of the resulting approximation is usually
addressed by reference to the following classical C\'ea type estimate
\begin{equation}\label{Cea} 
 \|u-u_h\|_X \le \Big(1+\frac{\kappa}{\beta}\Big)
 \inf_{w_h\in X_h: \gamma w_h=g_h}\|u-w_h\|_X,
\end{equation}
where $\kappa$ is the continuity constant of the bilinear form $B$. \revj{To prove this result one first proves an estimate for $u_h-w_h$  and then applies the triangle inequality.   To do this, we note that $\gamma(u_h-w_h)=0$ then apply \eqref{disinfsup}, Galerkin orthogonality (e.g. \eqref{continuousB} and \eqref{discreteB}), and use continuity of $B$. As a side note, perhaps one can avoid using the triangle inequality by using the techniques in \cite{XuZikatanov}}.

 It is clear that the accuracy depends on both the choice of finite dimensional
subspace $X_h\subset X$ and on the choice $g_h\approx g$ of the approximate
Dirichlet boundary condition. Nevertheless, the bound~\eqref{Cea} is somewhat
unsatisfactory. In particular, whilst $X_h\subset X$ and $g_h\approx g$ can
essentially be chosen independently of one another, the influence of each choice
on the accuracy of the resulting finite element approximation is obscured
through the requirement that the choice of comparator $w_h\in X_h$ is
constrained to satisfy the boundary condition $\gamma w_h=g_h$. 

Under what conditions is it possible to obtain an error estimate of the form 
\begin{equation}\label{introestimate}
\|u-u_h\|_{X} \le C (\inf_{v_h\in X_h}  \|u-v_h\|_X + \|g-g_h \|_M),
\end{equation}
in which the individual contributions to the error corresponding to the
choice of $X_h\subset X$ and $g_h\approx g$ are isolated? Suppose that there
exists a uniformly bounded \emph{discrete extension} operator $L_h: M_h
\rightarrow X_h$ such that 
\begin{equation}
 (P1)\quad\gamma L_h \mu_h=\mu_h \quad\forall \mu_h \in M_h;
 \quad
 (P2)\quad\|L_h \mu_h\|_{X} \le C_L \|\mu_h\|_{M}\quad\forall \mu_h \in M_h.
\end{equation}
Let $v_h\in X_h$ be arbitrary, and set $w_h = v_h - L_h(\gamma v_h - g_h)\in
X_h$, so that $\gamma w_h = g_h$ on $\Gamma$ and $u_h-w_h\in X_{\revj{h0}}$.
With this choice, estimate~\eqref{Cea} then gives
\begin{equation}
 \|u - u_h\|_X \le \Big(1+\frac{\kappa}{\beta}\Big)\|u-v_h+L_h(\gamma v_h-g_h)\|_X.
\end{equation}
With the aid of the triangle inequality and $(P2)$,  the right hand side in
the above estimate may be bounded by $\|u-v_h\|_X + C_L\|\gamma v_h - g_h\|_M$.
The second term in this expression can be bounded by inserting $0=\gamma u -
g$, applying the triangle inequality and using the continuity of the trace
operator $\gamma:X\to M$ to obtain $\|\gamma v_h - g_h\|_M\le C\|v_h-u\|_X +
\|g-g_h\|_M$. Combining the above estimates, we conclude
that~\eqref{introestimate} holds whenever there exists a discrete extension
operator satisfying $(P1)$-$(P2)$. Interestingly, the existence
of an operator satisfying $(P1)-(P2)$ is also \emph{necessary}
for a bound of the form~\eqref{introestimate} to hold~\cite{DoSa:2003,
Sayas:2007}. 

The existence of uniformly bounded discrete extension operators satisfying
$(P1)$-$(P2)$ is important in many areas of numerical analysis including the
construction of domain decomposition preconditioners~\cite{Widlund}. The main
purpose of the current work is to establish the existence of discrete extension
operators satisfying $(P1)$-$(P2)$ in the case where the discrete spaces are
taken to be conforming discretisations of $H(\mathrm{div})$ and $H(\mathrm{curl})$, i.e.
Raviart-Thomas and N\'ed\'elec spaces, on \revj{on general shape regular mesh}
partitioning of a polyhedral domain into tetrahedra. 

Various results concerning stable extension operators are interspersed in the
literature. There is a number of results available in the literature
concerning discrete extensions from the boundary to the interior on a single
isolated element~\cite{DeGoSc:2008}, but the fact that the norms on the trace
spaces are not additive means that one cannot prove the results for collections
of elements by simply summing contributions from individual elements. The case
of Raviart-Thomas elements on a two-dimensional domain appears
in~\cite{BaGa:2003} applied to the analysis of weakly imposed essential boundary
conditions for the mixed Laplacian for the case of quasi-uniform triangulations,
and was subsequently extended~\cite{GaOySa:2012} to cover meshes that are
quasi-uniform in a neighborhood of the boundary. Subsequently, the case of
general non-quasi-uniform meshes was covered in~\cite{MaMeSa} although,
unfortunately, the arguments used seem to be limited to the two-dimensional
setting. \revj{Stable discrete extensions for $H(\text{curl}, \Omega)$ conforming spaces have also appeared in the literature and have important applications; see \cite{HiptmairJerez-HanckesMao, HiptmairMao, AlonsoValli}.  Again, the results in these articles assume some degree of quasi-uniformity.} 

The approach employed in the present work for the treatment of Raviart-Thomas
elements is similar to the idea used \revj{in~\cite[Lemma 3.2]{BaGa:2003} and \cite[Lemma 5.1]{GaOySa:2012}} without,
however, requiring quasi-uniformity of the mesh. The key to relaxing
the conditions on the mesh is to develop local regularity estimates along with
discrete norm equivalences valid on \revj{general shape regular meshes}~\cite{AiMcTr:1999},
and to show that the operator defined in \cite{GaOySa:2012} is, in fact,
uniformly bounded on general shape regular meshes. 
\revj{ The proof of a bounded discrete extension operator for N\'ed\'elec  spaces is similar to a proof found in \cite{HiptmairMao}}.
The idea is to first split the discrete trace using a discrete Hodge decomposition and to then use our extension result
for Raviart-Thomas elements to handle one component of the splitting, with the
remaining component treated using an idea adapted from~\cite{MaMeSa}. The resulting extension yields a divergence-free field which therefore belongs Brezzi-Douglas-Marini space. Consequently, our results for the Raviart-Thomas
case (i.e. N\'ed\'elec spaces in the three dimensional case~\cite{Nedelec:1980})
extend to the three dimensional counterpart of the Brezzi-Douglas-Marini finite
element~\cite{Nedelec:1986,BrDoDuFo:1987}.

The plan of the paper is as follows. The main results are stated in Section 2.
Proofs are given in Sections 3 for the Raviart-Thomas elements, and in Section 4
for the N\'ed\'elec elements. The local regularity estimates needed in
Section 3 are given in the Appendix. Basic results on the spaces $H^1(\Omega)$,
$H(\text{div},\Omega)$, and $H(\text{curl},\Omega)$ will be assumed throughout.
The symbol $\lesssim$ will be used as follows: for two quantities $a_h$ and
$b_h$ depending on the triangulations (see below), we write $a_h\lesssim b_h$,
whenever there exists $C>0$ independent of $h$, such that $a_h \le C b_h$. The
quantity $C$ will be allowed to depend on: the polynomial degree, the
shape-regularity of the triangulation, and the domain.

\section{Main results}
Let $\Omega \subset \mathbb R^3$ be a connected polyhedral Lipschitz domain. The unit outward pointing normal vector field on $\Gamma:=\partial\Omega$ will be denoted by $\boldsymbol n$. Let now $\Oh$ be a shape regular simplicial triangulation of $\Omega$ which, however, need not be quasi-uniform. For each $K \in \Oh$ we let $h_K$ denote the diameter of $K$.  We then consider the spaces of Raviart-Thomas and N\'ed\'elec finite elements:
\begin{eqnarray*}
\bV_h &:=& \{ \bq \in H(\text{div},\Omega): \bq|_K \in [\pol^k(K)]^3+ \pol^k(K) \bx , \text{ for all } K \in \Oh\}  ,\\
\boldsymbol N_h &:=& \{ \bq\in H(\text{curl},\Omega):\bq|_K\in [\pol^k(K)]^3+[\pol^k(K)]^3 \times \bx,\text{ for all } K\in \Oh\},
\end{eqnarray*}
where $\pol_k(S)$ is the space of polynomials of degree $k$ or less defined on $S$.
On the boundary $\Gamma$, we consider the induced triangulation, 
\begin{equation*}
\Gamma_h=\{ \partial \Omega \cap \partial K: \text{ for all } K \in \Oh\},
\end{equation*}
and two spaces
\begin{eqnarray*}
M_h &:=& \{ \bq\cdot\boldsymbol n : \bq \in \boldsymbol V_h\} = \{ v: v|_F \in \pol^k(F) \text{ for all } F \in \Gamma_h \} \\
\boldsymbol R_h &:=& \{\bq\times\boldsymbol n : \bq\in \boldsymbol N_h\} 
=\{ \boldsymbol r \in H(\text{div}_\Gamma,\Gamma):  \boldsymbol r|_F \in [\pol^k(F)]^2+\pol^k(F)\boldsymbol{x}_t
\text{ for all } F \in \Gamma_h \}.
\end{eqnarray*}
In the last space $\boldsymbol{x}_t:=\boldsymbol x-(\boldsymbol x\cdot\boldsymbol n)\boldsymbol n$ is the tangential position vector, and $\text{div}_\Gamma$ is the tangential divergence operator. We also consider $M_h^0$ to be the subset of $M_h$ consisting of elements whose average value vanishes.

Let $S$ be a $d$-dimensional domain, then the fractional Sobolev norms on $S$ are defined as follows: for non-negative integer $k$ and $0<s<1$,
we define
\begin{equation*}
\|v\|_{H^{k+s}(S)}^2=\|v\|_{H^{k}(S)}^2+|v|_{H^{k+s}(S)}^2,
\end{equation*}
where
\begin{equation*}
|v|_{H^{k+s}(S)}^2 =\sum_{|\alpha|=k} \int_S \int_S \frac{|\partial ^\alpha v(x)-\partial^\alpha v(y)|^2}{|x-y|^{d+2s}} dx dy
\end{equation*}
is the Slobodetskij seminorm and $\|\,\cdot\,\|_{H^k(S)}$ is the usual Sobolev norm. For negative $s$, $H^{-s}(S)$ is the dual space of $H^s_0(S)$, the closure in $H^s(S)$ of the set of smooth compactly supported functions. In particular, in the case of the closed surface $\Gamma$, we can write for functions $v \in L^2(\Gamma)$:
\begin{equation*}
\|v\|_{H^{-1/2}(\Gamma)}=\sup_{w \in C^{\infty}(\Gamma), \|w\|_{H^{1/2}(\Gamma)}=1} \int_{\Gamma} v w. 
\end{equation*}

As noted before, the operator $\boldsymbol V_h\ni \boldsymbol v\mapsto \boldsymbol v\cdot\boldsymbol n\in M_h$ is surjective. The next result shows that there is a right-inverse of this operator that is  bounded as an operator $H^{-1/2}(\Gamma)\to H(\mathrm{div};\Omega)$, uniformly in the mesh size. Using the result  \cite[Theorem 5.1]{MaMeSa} it is enough to establish the uniform extension for data in $M_h^0$.

\begin{theorem}\label{the:MAIN1}
There exists a constant $C$ depending only on the shape regularity of $\Oh$ and on $\Omega$ such that for any $g_h \in M_h^0$ there exists $\bsigma_h \in \bV_h$ with the following properties: 
\begin{itemize}
\item[(a)] $\bsigma_h \cdot \bn=g_h$ \text{ on } $\Gamma$,
\item[(b)] $ \|\bsigma_h\|_{H(\mathrm{div};\Omega)} \lesssim   \|g_h\|_{H^{-1/2}(\partial \Omega)}$,
\item[(c)] $\mathrm{div}\, \boldsymbol\sigma_h =0$ \text{ in } $\Omega$. 
\end{itemize}
\end{theorem}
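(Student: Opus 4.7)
The plan is to construct $\bsigma_h$ as a discrete projection of a continuous lift of $g_h$. Since $g_h \in M_h^0$ satisfies the compatibility condition $\int_\Gamma g_h = 0$, there exists a unique (up to an additive constant) $\phi \in H^1(\Omega)$ with $\Delta\phi = 0$ in $\Omega$ and $\partial_{\bn}\phi = g_h$ on $\Gamma$, together with the a priori estimate $\|\nabla\phi\|_{L^2(\Omega)} \lesssim \|g_h\|_{H^{-1/2}(\Gamma)}$. Setting $\bsigma := \nabla \phi$, we obtain a continuous extension in $H(\dive;\Omega)$ satisfying $\bsigma\cdot\bn = g_h$ on $\Gamma$, $\dive\bsigma = 0$ in $\Omega$, and $\|\bsigma\|_{H(\dive;\Omega)}\lesssim\|g_h\|_{H^{-1/2}(\Gamma)}$. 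What remains is to produce a discrete $\bsigma_h \in \bV_h$ close to $\bsigma$ and retaining the constraints (a) and (c).

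For this I would adopt the mixed-method construction of \cite[Lemma 5.1]{GaOySa:2012}: find $(\bsigma_h, u_h)\in\bV_h\times Q_h^0$, with $Q_h^0$ the subspace of piecewise polynomials of degree at most $k$ with zero mean, such that $\bsigma_h\cdot\bn=g_h$ on $\Gamma$ and
\begin{equation*}
(\bsigma_h,\bld{\tau}_h)-(u_h,\dive\bld{\tau}_h)=(\bsigma,\bld{\tau}_h)\quad\forall\,\bld{\tau}_h\in\bV_{h,0},\qquad (\dive\bsigma_h,v_h)=0\quad\forall\, v_h\in Q_h^0,
\end{equation*}
where $\bV_{h,0}$ collects the members of $\bV_h$ with vanishing normal trace on $\Gamma$. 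Standard mixed-finite-element theory yields existence, uniqueness, properties (a) and (c), and a C\'ea-type bound
\begin{equation*}
\|\bsigma-\bsigma_h\|_{L^2(\Omega)}\lesssim \inf\bigl\{\|\bsigma-\bld{\tau}_h\|_{L^2(\Omega)}:\bld{\tau}_h\in\bV_h,\ \bld{\tau}_h\cdot\bn=g_h,\ \dive\bld{\tau}_h=0\bigr\}.
\end{equation*}

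The main task — and the point at which this paper advances over \cite{GaOySa:2012} — is to bound the above infimum by $\|\bsigma\|_{L^2(\Omega)}$ uniformly on general shape-regular meshes. I would construct an elementwise approximation to $\bsigma$ exploiting the interior $C^\infty$ smoothness of the harmonic function $\phi$ together with its limited regularity $\phi\in H^{1+\alpha}$ near $\Gamma$ (with $\alpha>0$ depending on the polyhedral geometry), quantified by the local regularity estimates in the Appendix. Summing the local approximation errors, weighted by the local mesh sizes $h_K$, via the discrete norm equivalences of \cite{AiMcTr:1999} — which remain valid without quasi-uniformity — should deliver the bound. The principal obstacle is controlling the contribution of small tetrahedra abutting $\Gamma$, where the regularity of $\phi$ is weakest yet the boundary trace $g_h$ and the divergence-free constraint must be matched exactly; this step will most likely require a local Bogovskii-type divergence correction after an initial quasi-interpolation that matches the prescribed normal flux on each face.
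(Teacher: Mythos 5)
Your overall architecture --- harmonic lift $\bsigma=\nabla\phi$ of $g_h$, followed by a discrete approximation that preserves the normal trace and the divergence constraint --- matches the paper's, and you correctly identify the two key ingredients (local regularity for $\phi$ near $\Gamma$ and the discrete norm equivalences of Ainsworth--McLean--Tran). The gap is that the step you defer to the end is the entire content of the theorem, and the mechanism you propose for it is both unnecessary and unlikely to work as stated: a quasi-interpolant followed by a local Bogovskii-type correction does not obviously match the prescribed flux $g_h$ face by face while remaining exactly divergence-free and uniformly bounded on locally refined meshes, so you would have to re-prove trace and divergence compatibility for the corrected field, which is essentially the original problem again. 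What closes the argument --- and what the paper does --- is the observation that the canonical Raviart--Thomas projection $\Pi\bsigma$ already does everything at once: it is well defined because elliptic regularity gives $\bsigma=\nabla\phi\in (H^{1/2+s}(\Omega))^3$ for some $s>0$, so the face moments make sense; it satisfies $\Pi\bsigma\cdot\bn=g_h$ exactly because $g_h|_F\in\pol^k(F)$ and the face degrees of freedom match all moments of $\bsigma\cdot\bn$ against $\pol^k(F)$; and $\dive\Pi\bsigma=P\dive\bsigma=0$ by the commuting-diagram property. With this comparator your constrained infimum is bounded by $\|\bsigma-\Pi\bsigma\|_{L^2(\Omega)}$, and the detour through the discrete mixed problem becomes superfluous: one can simply set $\bsigma_h=\Pi\bsigma$.

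The quantitative part then requires two steps you did not supply. First, the elementwise bound $\|\Pi\bsigma-\bsigma\|_{L^2(K)}\lesssim h_K^{1/2+s}\|\bsigma\|_{H^{1/2+s}(K)}$ must be combined with a \emph{localized} regularity estimate of the form
\begin{equation*}
\|\bsigma\|_{H^{1/2+s}(K)}\lesssim h_K^{-1/2-s}\|\bsigma\|_{L^2(D_K)}+\|g_h\|_{H^s(\partial D_K\cap\Gamma)}+h_K^{-s}\|g_h\|_{L^2(\partial D_K\cap\Gamma)},
\end{equation*}
where $D_K$ is the patch of elements meeting $K$; this is obtained by a cut-off argument applied to the global Neumann regularity estimate and is precisely what replaces quasi-uniformity. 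Second, after summing over $K\in\Oh$ the boundary contributions reduce to $\sum_{F\in\Gamma_h}h_F\|g_h\|_{L^2(F)}^2$, and one needs the inverse inequality $\sum_{F\in\Gamma_h}h_F\|g_h\|_{L^2(F)}^2\lesssim\|g_h\|_{H^{-1/2}(\Gamma)}^2$, valid on locally refined meshes by the cited norm equivalences. Your sketch names the right references but never shows how the nonlocal $H^{-1/2}(\Gamma)$ norm is recovered from these local, mesh-weighted $L^2$ quantities; that reduction is where the hypothesis $g_h\in M_h$ (rather than $g_h$ arbitrary) enters and cannot be omitted.
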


The second result concerns the N\'ed\'elec space, asserting the existence of a uniformly bounded right-inverse of the operator $\boldsymbol N_h\ni \boldsymbol w_h \mapsto \boldsymbol w_h\times \boldsymbol n\in \boldsymbol R_h$.

\revj{In order to state the  result we need to define the trace space of the $\mathrm{curl}$ operator.
\begin{equation*}
{\boldsymbol H}_{\|}^{-1/2} ({\rm div}_\Gamma; \Gamma):=\{ {\boldsymbol r} \in {\boldsymbol H}_{\|}^{-1/2}(\Gamma) : {\rm div}_\Gamma  {\boldsymbol r}\in H^{-1/2}(\Gamma) \},
\end{equation*}
with norm
\begin{equation*}
\|\boldsymbol r\|_{{\boldsymbol H}_{\|}^{-1/2} ({\rm div}_\Gamma; \Gamma)}^2 =\|\boldsymbol r\|_{{\boldsymbol H}_{\|}^{-1/2}( \Gamma)}^2+ \|{\rm div}_\Gamma  {\boldsymbol r}\|_{H^{-1/2}(\Gamma)}^2.
\end{equation*}
The space  ${\boldsymbol H}_{\|}^{-1/2}(\Gamma)$ along with the norm are defined in \cite{BuffaCiarlet1}; see also \cite{BuffaCiarlet2, BuffaCostabelSheen}. }

\revj{ In  \cite{BuffaCiarlet1} the following continuity result was proved. 
\begin{proposition}
 For $\boldsymbol w \in H(\mathrm{curl};\Omega)$, $\boldsymbol w  \times \boldsymbol n \in {\boldsymbol H}_{\|}^{-1/2} ({\rm div}_\Gamma; \Gamma)$ with the bound
\begin{equation}\label{CURLcontinuity}
\|\boldsymbol w \times \boldsymbol n \|_{{\boldsymbol H}_{\|}^{-1/2} ({\rm div}_\Gamma; \Gamma)} \lesssim  \|\boldsymbol w\|_{H (\mathrm {curl};\Omega)}.
\end{equation}
\end{proposition}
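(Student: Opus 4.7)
The plan is to reduce the claim to two ingredients: the classical integration by parts formula for $\mathrm{curl}$, and the observation that $\mathrm{curl}\,\boldsymbol w$ already belongs to $H(\mathrm{div};\Omega)$. First, for smooth $\boldsymbol w$ and any smooth $\boldsymbol\phi$ defined on $\Omega$, Green's formula gives
\begin{equation*}
\int_\Omega \boldsymbol w\cdot\mathrm{curl}\,\boldsymbol\phi\,dx - \int_\Omega \mathrm{curl}\,\boldsymbol w\cdot\boldsymbol\phi\,dx = \int_\Gamma (\boldsymbol w\times\boldsymbol n)\cdot\boldsymbol\phi_t\,dS,
\end{equation*}
where $\boldsymbol\phi_t=\boldsymbol\phi-(\boldsymbol\phi\cdot\boldsymbol n)\boldsymbol n$ is the tangential component. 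I would use this to \emph{define} $\boldsymbol w\times\boldsymbol n$ for general $\boldsymbol w\in H(\mathrm{curl};\Omega)$ as a continuous linear functional on the tangential-trace space $\boldsymbol H_\|^{1/2}(\Gamma)$, which is the predual of $\boldsymbol H_\|^{-1/2}(\Gamma)$ in the Buffa--Ciarlet framework. Concretely, given $\boldsymbol\mu\in\boldsymbol H_\|^{1/2}(\Gamma)$ one picks a continuous lift $\boldsymbol\phi\in H^1(\Omega)^3$ with $\boldsymbol\phi_t|_\Gamma=\boldsymbol\mu$ and $\|\boldsymbol\phi\|_{H^1(\Omega)}\lesssim \|\boldsymbol\mu\|_{\boldsymbol H_\|^{1/2}(\Gamma)}$ (this is where the Buffa--Ciarlet surjectivity of the tangential trace from $H^1(\Omega)^3$ is needed). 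Cauchy--Schwarz applied to the left-hand side of the identity then yields
\begin{equation*}
\bigl|\langle \boldsymbol w\times\boldsymbol n,\boldsymbol\mu\rangle\bigr| \lesssim \|\boldsymbol w\|_{H(\mathrm{curl};\Omega)}\,\|\boldsymbol\phi\|_{H^1(\Omega)} \lesssim \|\boldsymbol w\|_{H(\mathrm{curl};\Omega)}\,\|\boldsymbol\mu\|_{\boldsymbol H_\|^{1/2}(\Gamma)},
\end{equation*}
which bounds $\|\boldsymbol w\times\boldsymbol n\|_{\boldsymbol H_\|^{-1/2}(\Gamma)}$ by $\|\boldsymbol w\|_{H(\mathrm{curl};\Omega)}$.

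Next, to control $\mathrm{div}_\Gamma(\boldsymbol w\times\boldsymbol n)$ in $H^{-1/2}(\Gamma)$, I would exploit the surface identity
\begin{equation*}
\mathrm{div}_\Gamma(\boldsymbol w\times\boldsymbol n) = -(\mathrm{curl}\,\boldsymbol w)\cdot\boldsymbol n,
\end{equation*}
valid for smooth $\boldsymbol w$ and to be interpreted by duality in the general case. Since $\mathrm{curl}\,\boldsymbol w\in L^2(\Omega)^3$ and $\mathrm{div}\,\mathrm{curl}\,\boldsymbol w=0$, the field $\mathrm{curl}\,\boldsymbol w$ lies in $H(\mathrm{div};\Omega)$ with norm bounded by $\|\boldsymbol w\|_{H(\mathrm{curl};\Omega)}$. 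The standard normal-trace result, $\boldsymbol q\mapsto \boldsymbol q\cdot\boldsymbol n$ being continuous from $H(\mathrm{div};\Omega)$ to $H^{-1/2}(\Gamma)$, then gives
\begin{equation*}
\|\mathrm{div}_\Gamma(\boldsymbol w\times\boldsymbol n)\|_{H^{-1/2}(\Gamma)} = \|(\mathrm{curl}\,\boldsymbol w)\cdot\boldsymbol n\|_{H^{-1/2}(\Gamma)} \lesssim \|\mathrm{curl}\,\boldsymbol w\|_{H(\mathrm{div};\Omega)} \lesssim \|\boldsymbol w\|_{H(\mathrm{curl};\Omega)}.
\end{equation*}
Squaring and adding the two bounds yields \eqref{CURLcontinuity}. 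A density argument (smooth fields are dense in $H(\mathrm{curl};\Omega)$) removes the smoothness hypothesis on $\boldsymbol w$ and justifies the distributional meaning of $\mathrm{div}_\Gamma(\boldsymbol w\times\boldsymbol n)$.

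The main obstacle is not either estimate individually but the careful construction of the trace spaces themselves: $\boldsymbol H_\|^{\pm 1/2}(\Gamma)$ on a Lipschitz polyhedron are not simply componentwise Sobolev spaces because the unit normal is only piecewise smooth, so duality pairings, the surface divergence, and the lift from $\boldsymbol H_\|^{1/2}(\Gamma)$ to $H^1(\Omega)^3$ all require the machinery developed in \cite{BuffaCiarlet1,BuffaCiarlet2,BuffaCostabelSheen}. Once those spaces and their duality are in place, the proof reduces to the two short computations above.
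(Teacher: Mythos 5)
The paper does not actually prove this proposition: it is quoted verbatim as a known result from \cite{BuffaCiarlet1}, so there is no internal proof to compare against. Your proposal is a correct reconstruction of the standard argument behind that citation, and it is essentially the proof given in the Buffa--Ciarlet papers: (i) the integration-by-parts formula realises $\boldsymbol w\times\boldsymbol n$ as a functional on the tangential-components trace space $\pi_\tau(H^1(\Omega)^3)=\boldsymbol H_\|^{1/2}(\Gamma)$, whose dual is by definition $\boldsymbol H_\|^{-1/2}(\Gamma)$, and the bound follows from Cauchy--Schwarz plus a bounded lift of $\boldsymbol\mu$; (ii) the identity $\mathrm{div}_\Gamma(\boldsymbol w\times\boldsymbol n)=(\mathrm{curl}\,\boldsymbol w)\cdot\boldsymbol n$ together with $\mathrm{div}\,\mathrm{curl}=0$ and the $H(\mathrm{div};\Omega)\to H^{-1/2}(\Gamma)$ normal-trace estimate controls the surface divergence. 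Two small points of care: the well-definedness of the duality pairing requires checking independence of the lift (immediate, since the boundary term vanishes when $\boldsymbol\phi_t=0$), and your sign in $\mathrm{div}_\Gamma(\boldsymbol w\times\boldsymbol n)=-(\mathrm{curl}\,\boldsymbol w)\cdot\boldsymbol n$ is the opposite of the convention the paper uses later (in the proof of Lemma \ref{lemma:44.1} it takes $\boldsymbol q_h\cdot\boldsymbol n=\mathrm{div}_\Gamma(\boldsymbol w_h\times\boldsymbol n)$ with $\boldsymbol q_h=\nabla\times\boldsymbol w_h$); this is an orientation convention and is irrelevant to the norm bound. You are also right that the genuinely delicate content lives in the construction of $\boldsymbol H_\|^{\pm 1/2}(\Gamma)$ on a Lipschitz polyhedron, which is exactly why the authors delegate the whole statement to \cite{BuffaCiarlet1,BuffaCiarlet2,BuffaCostabelSheen} rather than proving it.
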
 }

\begin{theorem}\label{the:MAIN2}
There exists a constant $C$ depending only on the shape regularity of $\Oh$ and on $\Omega$ such that for any $\boldsymbol r_h \in \boldsymbol R_h$ there exists $\boldsymbol w_h \in \boldsymbol N_h$ with the following properties: 
\begin{itemize}
\item[(a)] $\boldsymbol w_h\times \boldsymbol n=\boldsymbol r_h$ \text{ on } $\Gamma$, 
\item[(b)] $ \|\boldsymbol w_h\|_{H(\mathrm{curl};\Omega)} \lesssim  \revj{\|\boldsymbol r_h\|_{{{\boldsymbol H}_{\|}^{-1/2} ({\rm div}_\Gamma; \Gamma)}}}$. 
\end{itemize}
\end{theorem}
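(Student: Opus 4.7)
The plan is to split $\boldsymbol r_h$ via a discrete Hodge decomposition on the surface mesh $\Gamma_h$ and extend each piece separately, in the spirit of~\cite{HiptmairMao}. The induced surface trace spaces fit into the discrete complex
\begin{equation*}
W_h^\Gamma \xrightarrow{\;\nabla_\Gamma\;} \boldsymbol R_h \xrightarrow{\;\mathrm{div}_\Gamma\;} M_h^0,
\end{equation*}
where $W_h^\Gamma$ is the space of continuous piecewise polynomials of degree $k+1$ on $\Gamma_h$. I intend to peel off the tangential divergence of $\boldsymbol r_h$ using Theorem~\ref{the:MAIN1}, leaving a surface-divergence-free remainder that is then represented as a discrete surface gradient and extended to a bulk gradient.

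\textbf{Step 1 (divergence part).} Because $\Gamma$ is closed, $g_h:=-\mathrm{div}_\Gamma \boldsymbol r_h$ satisfies $\int_\Gamma g_h=0$, so $g_h\in M_h^0$ and $\|g_h\|_{H^{-1/2}(\Gamma)}\le \|\boldsymbol r_h\|_{{\boldsymbol H}_{\|}^{-1/2}(\mathrm{div}_\Gamma;\Gamma)}$. Theorem~\ref{the:MAIN1} delivers $\boldsymbol\sigma_h\in\boldsymbol V_h$ with $\boldsymbol\sigma_h\cdot\boldsymbol n=g_h$, $\mathrm{div}\,\boldsymbol\sigma_h=0$, and $\|\boldsymbol\sigma_h\|_{L^2(\Omega)}\lesssim \|g_h\|_{H^{-1/2}(\Gamma)}$. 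Since $\boldsymbol\sigma_h$ is a discrete divergence-free element of $\boldsymbol V_h$, exactness of the discrete de~Rham sequence on $\mathcal T_h$ (up to the usual topological correction if $\Omega$ is not simply connected) supplies $\boldsymbol w_h^{(1)}\in\boldsymbol N_h$ with $\mathrm{curl}\,\boldsymbol w_h^{(1)}=\boldsymbol\sigma_h$. Fixing a stable right inverse of the discrete $\mathrm{curl}$ (for example via a discrete Coulomb gauge, or by interpolating a continuous Poincaré potential) then yields $\|\boldsymbol w_h^{(1)}\|_{H(\mathrm{curl};\Omega)}\lesssim \|\boldsymbol\sigma_h\|_{L^2(\Omega)}$.

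\textbf{Step 2 (curl part).} A direct calculation gives $\mathrm{div}_\Gamma(\boldsymbol w_h^{(1)}\times\boldsymbol n)=-\boldsymbol n\cdot\mathrm{curl}\,\boldsymbol w_h^{(1)}=-\boldsymbol\sigma_h\cdot\boldsymbol n=\mathrm{div}_\Gamma\boldsymbol r_h$, so the remainder $\boldsymbol t_h:=\boldsymbol r_h-\boldsymbol w_h^{(1)}\times\boldsymbol n\in\boldsymbol R_h$ has vanishing surface divergence. Exactness of the discrete surface complex then produces $\phi_h\in W_h^\Gamma$, normalised by $\int_\Gamma \phi_h=0$, with $\nabla_\Gamma\phi_h\times\boldsymbol n=\boldsymbol t_h$. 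Adapting the two-dimensional construction of~\cite{MaMeSa} to $\Gamma_h$ yields the non-quasi-uniform stability bound
\begin{equation*}
\|\phi_h\|_{H^{1/2}(\Gamma)}\lesssim \|\boldsymbol t_h\|_{{\boldsymbol H}_{\|}^{-1/2}(\Gamma)}\lesssim \|\boldsymbol r_h\|_{{\boldsymbol H}_{\|}^{-1/2}(\mathrm{div}_\Gamma;\Gamma)}+\|\boldsymbol w_h^{(1)}\|_{H(\mathrm{curl};\Omega)},
\end{equation*}
where the final inequality uses~\eqref{CURLcontinuity} applied to $\boldsymbol w_h^{(1)}$.

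\textbf{Step 3 (bulk extension and conclusion).} A Scott-Zhang-type discrete extension for the Lagrange space of degree $k+1$ on $\mathcal T_h$ provides $\Phi_h$ on $\Omega$ with $\Phi_h|_\Gamma=\phi_h$ and $\|\Phi_h\|_{H^1(\Omega)}\lesssim \|\phi_h\|_{H^{1/2}(\Gamma)}$. Since $\Phi_h$ is a Lagrange function of degree $k+1$, $\nabla\Phi_h\in\boldsymbol N_h$, is curl-free, and has tangential trace $(\nabla\Phi_h)\times\boldsymbol n=\nabla_\Gamma\phi_h\times\boldsymbol n=\boldsymbol t_h$. Setting $\boldsymbol w_h:=\boldsymbol w_h^{(1)}+\nabla\Phi_h\in\boldsymbol N_h$ then gives $\boldsymbol w_h\times\boldsymbol n=\boldsymbol r_h$, and combining the estimates from the three steps delivers the required $H(\mathrm{curl};\Omega)$ bound.

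\textbf{Main obstacle.} The crux is Step~2: producing the surface potential $\phi_h$ with the $H^{1/2}(\Gamma)$ bound on a \emph{shape-regular but possibly highly graded} surface triangulation. This is precisely where the quasi-uniformity assumption present in~\cite{HiptmairMao,AlonsoValli,HiptmairJerez-HanckesMao} must be removed; it requires the two-dimensional adaptation of~\cite{MaMeSa} together with local fractional-order norm equivalences on locally refined meshes. Securing the uniformly bounded right inverse of $\mathrm{curl}$ in Step~1 on general shape-regular meshes also needs care but is standard. The remaining ingredients are Theorem~\ref{the:MAIN1}, the continuity estimate~\eqref{CURLcontinuity}, and a classical discrete extension for the Lagrange space.
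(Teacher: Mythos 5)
Your architecture coincides with the paper's: peel off $\mathrm{div}_\Gamma\boldsymbol r_h$ using Theorem~\ref{the:MAIN1}, invert the discrete $\mathrm{curl}$ on the resulting divergence-free Raviart--Thomas field via a commuting projection applied to a continuous Poincar\'e potential (the paper's Lemma~\ref{lemma:44.3}, based on~\cite{ChWi:2008}), write the surface-divergence-free remainder as $\mathrm{curl}_\Gamma\phi_h$ with $\phi_h$ a Lagrange function on $\Gamma_h$ by discrete exactness, and lift $\phi_h$ into the volume by Scott--Zhang so that the gradient of the lift lands in $\boldsymbol N_h$. The one place you part company with the paper is the bound on $\phi_h$, and it is precisely the step you single out as the ``main obstacle.'' You propose to obtain $\|\phi_h\|_{H^{1/2}(\Gamma)}\lesssim\|\boldsymbol t_h\|_{\boldsymbol H_{\|}^{-1/2}(\Gamma)}$ by adapting the discrete two-dimensional construction of~\cite{MaMeSa} to a graded surface mesh, which would indeed be delicate. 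But no discrete argument is needed here: the estimate
\begin{equation*}
\|\phi\|_{H^{1/2}(\Gamma)/\mathbb{R}}\lesssim\|\mathrm{curl}_\Gamma\phi\|_{\boldsymbol H_{\|}^{-1/2}(\Gamma)}
\end{equation*}
holds for \emph{every} $\phi\in H^{1/2}(\Gamma)$ as a purely continuous inequality on the closed surface (the paper's Lemma~\ref{lemma:44.2}, quoted from~\cite{BuffaCostabelSheen}), and since $\phi_h$ is in particular an $H^{1/2}(\Gamma)$ function, the bound is automatic and manifestly independent of the mesh. All of the genuinely mesh-sensitive work in removing quasi-uniformity is concentrated in Theorem~\ref{the:MAIN1}, which you are already invoking as a black box. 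With that substitution your Step~2 closes, and the remaining steps are correct as written (your sign conventions for $\mathrm{div}_\Gamma(\boldsymbol w\times\boldsymbol n)$ differ from the paper's but are internally consistent).
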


\section{Discrete Extension Operators for Raviart-Thomas Finite Element Spaces}

We first recall some properties of the Raviart-Thomas projection. Let $\bv \in H^{1/2+s}(\Omega)$ for some $s>0$. Then we define $\Pi \bv \in \bV_h$ satisfying
\begin{alignat*}{6}
\int_{F} (\Pi \bv \cdot \bn) w&=\int_{F} (\bv \cdot \bn) w \qquad && \forall w\in \pol^k(F) \qquad& &\quad  \forall F\in \mathcal E_h,\\
\int_K \Pi\bv\cdot \boldsymbol w& =\int_K \bv\cdot\boldsymbol w & & \forall \boldsymbol w\in [\pol^{k-1}(K)]^3 & & \quad \forall K \in \Oh
\end{alignat*}
(see \cite[Example 2.5.3]{BoBrFo:2013}). Here $\mathcal E_h$ is the set of all faces of the triangulation. The following classical result can be found in \cite[Propositions 2.5.1, 2.5.2]{BoBrFo:2013}.

\begin{proposition}\label{prop:2.1}
For every  $\bv \in (H^{1/2+s}(\Omega))^3$ one has 
\begin{itemize}
\item[(a)] $\dive \Pi \bv=P \dive \bv$, 
\item[(b)] $\|\Pi \bv-\bv\|_{L^2(K)} \lesssim h_K^{1/2+s} \|\bv\|_{H^{1/2+s}(K)}$ for all $K \in \Oh$,
\end{itemize}
where  $P$ is the $L^2(\Omega)$-orthogonal projection onto the space of piecewise $\pol^k(K)$ functions.
\end{proposition}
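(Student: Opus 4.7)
The plan is to treat the two properties separately: (a) is an algebraic identity that follows directly from the defining degrees of freedom and integration by parts, whereas (b) is a standard Bramble–Hilbert-plus-scaling argument, with fractional regularity needed only to make the face moments of $\Pi$ meaningful.

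For part (a), fix $K\in\Oh$ and note that $\dive(\Pi\bv)|_K \in \pol^k(K)$, because $\Pi\bv|_K\in[\pol^k(K)]^3+\pol^k(K)\bx$. Hence it suffices to show
\begin{equation*}
\int_K \dive(\Pi\bv)\, w = \int_K \dive\bv\, w \qquad \forall w\in\pol^k(K).
\end{equation*}
Integration by parts gives
\begin{equation*}
\int_K \dive(\Pi\bv - \bv)\, w
= -\int_K (\Pi\bv-\bv)\cdot\nabla w + \int_{\partial K}(\Pi\bv - \bv)\cdot\bn\, w.
\end{equation*}
The boundary term vanishes face-by-face since $w|_F\in\pol^k(F)$ and the face degrees of freedom of $\Pi$ match those of $\bv$ tested against $\pol^k(F)$; the interior term vanishes since $\nabla w\in[\pol^{k-1}(K)]^3$ and the interior moments of $\Pi\bv$ agree with those of $\bv$. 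Because $\dive\Pi\bv\in\pol^k(K)$, the identity forces $\dive\Pi\bv = P\dive\bv$ on $K$.

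For part (b), I would carry out the standard reference-element argument. Let $\hat K$ be the reference tetrahedron and let $F_K(\hat\bx)=B_K\hat\bx+\bb_K$ be the affine map onto $K$; using the Piola transform $\bv\mapsto\hat\bv = (\det B_K)\,B_K^{-1}(\bv\circ F_K)$, the RT projection commutes with the change of variables, i.e.\ $\widehat{\Pi\bv}=\hat\Pi\hat\bv$. On $\hat K$, the face moments of $\hat\Pi$ are continuous as functionals on $[H^{1/2+s}(\hat K)]^3$ (for $s>0$), using the standard trace estimate $\|\hat\bv\cdot\hat\bn\|_{L^2(\hat F)}\lesssim \|\hat\bv\|_{H^{1/2+s}(\hat K)}$, while the interior moments are even $L^2$-continuous. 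Hence $\hat\Pi:[H^{1/2+s}(\hat K)]^3\to [L^2(\hat K)]^3$ is bounded. Since $\hat\Pi$ fixes the reference RT space, and in particular constants, the Bramble–Hilbert / Deny–Lions argument yields
\begin{equation*}
\|\hat\Pi\hat\bv - \hat\bv\|_{L^2(\hat K)} \lesssim |\hat\bv|_{H^{1/2+s}(\hat K)}.
\end{equation*}
Transferring back to $K$ with the Piola scaling, shape regularity implies $\|\bv\|_{L^2(K)}\simeq h_K^{-1/2}\|\hat\bv\|_{L^2(\hat K)}$ and $|\bv|_{H^{1/2+s}(K)} \simeq h_K^{-1-s}|\hat\bv|_{H^{1/2+s}(\hat K)}$ (a direct computation with the Slobodetskij seminorm using $|B_K|\lesssim h_K$, $|\det B_K|\simeq h_K^3$), so combining the two scalings gives exactly the factor $h_K^{1/2+s}$ asserted in (b).

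The only nontrivial point is justifying boundedness of $\hat\Pi$ on $[H^{1/2+s}(\hat K)]^3$; this is where the hypothesis $s>0$ is essential, because for $s=0$ the face moments $\int_{\hat F}\hat\bv\cdot\hat\bn\,w$ are not continuous on $[L^2(\hat K)]^3$. Everything else — the commuting diagram (a), the polynomial-preservation property, and the Piola scaling — is entirely classical, and indeed this is why the authors cite \cite{BoBrFo:2013}; the presentation above simply assembles those ingredients.
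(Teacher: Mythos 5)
Your proof is correct and matches the paper's approach: the paper offers no proof of its own, citing \cite[Propositions 2.5.1, 2.5.2]{BoBrFo:2013}, and your argument --- the degrees-of-freedom/integration-by-parts identity for (a) and the Piola/Bramble--Hilbert scaling with the fractional trace estimate for (b) --- is exactly the classical derivation behind that citation. The only minor tacit assumption is that the integration by parts in (a) requires $\mathrm{div}\,\bv\in L^2$, which is implicit in the statement as given and automatic in the paper's application, where $\bv=\nabla u$ is divergence-free.
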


The following inverse inequality will play a key role in our analysis.

\begin{lemma} \label{lemma:1}
For any $g \in M_h$ we have 
\begin{equation*}
\sum_{F \in \Gamma_h} h_F \|g_h \|_{L^2(F)}^2 \lesssim \|g_h\|_{H^{-1/2}(\Gamma)} ^2
\end{equation*} 
\end{lemma}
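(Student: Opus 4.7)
The plan is to establish the inequality by duality, using the characterization of the $H^{-1/2}(\Gamma)$ norm recalled in the paper,
\[
 \|g_h\|_{H^{-1/2}(\Gamma)}=\sup_{w\in C^{\infty}(\Gamma),\ \|w\|_{H^{1/2}(\Gamma)}=1}\int_{\Gamma}g_h\,w,
\]
together with a carefully chosen test function built from face bubbles. For each boundary face $F\in\Gamma_h$ let $b_F$ denote the (two-dimensional) face bubble on $F$, normalised so that $\|b_F\|_{L^\infty(F)}\lesssim 1$, and extended by zero to the rest of $\Gamma$. I will take
\[
 w:=\sum_{F\in\Gamma_h}h_F\,g_h|_F\,b_F.
\]
Since $b_F$ vanishes on $\partial F$, the function $w$ is continuous on $\Gamma$ and piecewise polynomial, so it belongs to $H^1(\Gamma)\subset H^{1/2}(\Gamma)$, and it can be approximated in $H^{1/2}$ by $C^\infty$ functions by density.

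The first step is to show that $\int_\Gamma g_h w \gtrsim \sum_F h_F\|g_h\|_{L^2(F)}^2$. By equivalence of norms on the finite-dimensional space of polynomials of degree $\le 2k$ on a reference face, one has $\int_{\hat F}\hat p^2\,\hat b\gtrsim\int_{\hat F}\hat p^2$, which transfers by affine scaling to $\int_F g_h^2 b_F\gtrsim\|g_h\|_{L^2(F)}^2$. Summing over $F\in\Gamma_h$ weighted by $h_F$ yields the claimed lower bound on the pairing.

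The main obstacle is the second step: bounding $\|w\|_{H^{1/2}(\Gamma)}^2$ by the same quantity $\sum_F h_F\|g_h\|_{L^2(F)}^2$. A naive interpolation between $\|w\|_{L^2(\Gamma)}$ and $\|w\|_{H^1(\Gamma)}$ followed by Cauchy--Schwarz loses the sharp per-face $h_F$ weighting on a non-quasi-uniform mesh, so this global interpolation approach must be avoided. Instead, I will construct an explicit $H^1(\Omega)$-lifting $W$ of $w$ and invoke the trace inequality $\|w\|_{H^{1/2}(\Gamma)}\lesssim\|W\|_{H^1(\Omega)}$. For each boundary face $F$ let $K_F\in\Oh$ be the unique tetrahedron with $F\subset\partial K_F$, let $\tilde g_h$ denote the polynomial extension of $g_h|_F$ to $K_F$ that is constant in the direction transverse to $F$ in local affine coordinates, and let $B_F$ be the volumetric face bubble on $K_F$ (the product of the three barycentric coordinates corresponding to the vertices of $F$). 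Set $W|_{K_F}:=h_F\,\tilde g_h\,B_F$ on every element that contains a boundary face, and $W\equiv 0$ elsewhere. Because $B_F$ vanishes on all faces of $K_F$ other than $F$ itself, $W$ is continuous across interior element interfaces, so $W\in H^1(\Omega)$, and the trace of $W$ on $\Gamma$ equals $w$ by construction.

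To conclude, I estimate $\|W\|_{H^1(\Omega)}^2=\sum_F\|W\|_{H^1(K_F)}^2$ element by element. Standard scaling between the reference and the physical element yields $\|\tilde g_h\|_{L^2(K_F)}^2\lesssim h_F\|g_h\|_{L^2(F)}^2$, and the inverse inequality on $K_F$ gives $\|\nabla(\tilde g_h B_F)\|_{L^2(K_F)}\lesssim h_F^{-1}\|\tilde g_h B_F\|_{L^2(K_F)}\lesssim h_F^{-1}\|\tilde g_h\|_{L^2(K_F)}$. Multiplied by the prefactor $h_F$, both $\|W\|_{L^2(K_F)}^2$ and $\|\nabla W\|_{L^2(K_F)}^2$ are therefore bounded by $C h_F\|g_h\|_{L^2(F)}^2$ (using $h_F\lesssim\mathrm{diam}(\Omega)$ for the $L^2$ piece). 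Summing over $F$ and applying the trace inequality produces
\[
 \|w\|_{H^{1/2}(\Gamma)}^2\;\lesssim\;\sum_{F\in\Gamma_h}h_F\|g_h\|_{L^2(F)}^2.
\]
Inserting this bound together with the pairing estimate from the first step into the duality formula and squaring delivers the desired inequality.
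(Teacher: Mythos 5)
Your proof is correct, but it takes a genuinely different route from the paper. The paper's argument expands $g_h=\sum_F\sum_i g_{i,F}\varphi_{i,F}$ in a Lagrange basis, uses the scaling $\|\varphi_{i,F}\|_{H^{-1/2}(\Gamma)}^2\gtrsim h_F^3$ for individual basis functions and then invokes an $\ell^2$-stability (superadditivity) result for this decomposition in the negative-order norm, both quoted from Ainsworth--McLean--Tran; the whole point of citing that reference is that these facts hold on locally refined meshes. You instead prove the estimate by duality with an explicitly constructed test function $w=\sum_F h_F\,g_h|_F\,b_F$: the bubble-weighted norm equivalence gives the lower bound on the pairing, and the element-local volumetric lifting $W$ (face bubble times transversally constant extension, scaled by $h_F$) gives $\|w\|_{H^{1/2}(\Gamma)}\lesssim\bigl(\sum_F h_F\|g_h\|_{L^2(F)}^2\bigr)^{1/2}$ via the trace theorem. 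This is self-contained and correctly avoids the global $L^2$--$H^1$ interpolation that would fail without quasi-uniformity, since your lifting is purely local and every constant reduces to reference-element norm equivalences plus shape regularity; what you give up is brevity, and what you gain is independence from the cited machinery. Two small points to tidy up: (i) a tetrahedron may have more than one face on $\Gamma$, so $W|_{K}$ should be defined as the sum of the contributions from all boundary faces of $K$ (each term still vanishes on the other faces of $K$, so the trace identity and the $H^1$ bound survive unchanged); (ii) you should say explicitly that the duality pairing against $g_h\in L^2(\Gamma)$ extends continuously from $C^\infty(\Gamma)$ to $H^{1/2}(\Gamma)$, so that testing with the merely Lipschitz function $w$ is legitimate in the supremum defining $\|g_h\|_{H^{-1/2}(\Gamma)}$. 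Neither issue affects the validity of the argument.
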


\begin{proof}
This result is a consequence  of basic estimates given in \cite{AiMcTr:1999}. Let $\{\varphi_{i,F}\,:\,i=1,\ldots,\mathrm{dim}\pol^k(F)\}$ be a basis for $\pol^k(F)$ built by pushing forward the Lagrange basis on the reference element. Then we decompose $g_h=\sum_F \sum_i g_{i,F}\varphi_{i,F}$, and estimate
\begin{alignat*}{6}
\sum_{F\in \Gamma_h} h_F \| g_h\|_{L^2(F)}^2 
	\lesssim & \sum_F \sum_i \revj{h_F} |g_{i,F}|^2 \| \varphi_{i,F}\|_{L^2(F)}^2\\
	\lesssim & \sum_F \sum_i h_F^3  |g_{i,F}|^2 & \qquad 
			& \mbox{(simple computation)}\\
	\lesssim & \sum_F \sum_i \| g_{i,F}\varphi_{i,F}\|_{H^{-1/2}(\Gamma)}^2 & 
			& \mbox{(by \cite[Theorem 4.8]{AiMcTr:1999})}\\
	\lesssim & \| g_h\|^2_{H^{-1/2}(\Gamma)}, &
			&\mbox{(by \cite[Lemma 5.4]{AiMcTr:1999})}
\end{alignat*}
which finishes the proof.
\end{proof}

We also need elliptic regularity results; see for example \cite{Dauge} for the case $g=0$. Consider the Poisson problem with Neumann boundary conditions
\begin{subequations}\label{eq:poisson}
\begin{eqnarray}
-\triangle u=&f \qquad \text{ on } \Omega \\
\nabla u \cdot \bn =&g \qquad  \text{ on } \Gamma,
\end{eqnarray}
\end{subequations}
under the assumption that $\int_\Gamma g+\int_\Omega f=0$ and $\int_{\Omega} u=0$. Then, there exist $C>0$ and  $s\in (0,1/2)$ such that
\begin{equation}\label{eq:reg}
\|u\|_{H^{3/2+s}(\Omega)} \le  C\big( \|f\|_{H^{-1/2+s}(\Omega)}+\|g\|_{H^s(\Gamma)}\big).
\end{equation}
We can localize this regularity result to obtain:

\begin{theorem}\label{thm1}
Suppose that $f \equiv  0$ in \eqref{eq:poisson} and let $\bsigma=\nabla u$. Then for each $K \in \Oh$ we have 
\begin{equation*}
\|\bsigma\|_{H^{1/2+s}(K)} \le C ( h_K^{-1/2-s} \| \bsigma\|_{L^2(D_K)} + \|g\|_{H^s(\partial D_K \cap \Gamma)} + \,h_K^{-s}\|g\|_{L^2(\partial D_K \cap \Gamma)}  ),
\end{equation*}
where
\[
D_K:=\cup\{ K'\in \Oh\,:\, \overline K\cap \overline{K'} \neq\emptyset\}
\]
is the collection of tetrahedra sharing  one or more vertices with $K$.
\end{theorem}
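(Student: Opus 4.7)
The plan is to reduce the assertion to the global regularity estimate \eqref{eq:reg} by a standard cutoff-and-localisation argument, with the correct $h_K$-powers produced by rescaling to a reference patch. By shape regularity of $\Oh$, the diameter of $D_K$ is comparable to $h_K$, and up to rigid motion and a dilation by $h_K^{-1}$ the patch $D_K$ belongs to one of a finite collection of reference configurations, so it suffices to carry out the analysis on a fixed reference patch of unit size and then pull the result back to $D_K$.

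Fix a smooth cutoff $\chi$ with $\chi\equiv 1$ on $K$, $\mathrm{supp}\,\chi\subset D_K$, and $|\nabla^j\chi|\lesssim h_K^{-j}$ (its existence on the reference patch is immediate by shape regularity). Set $v:=\chi\,(u-\bar u)$, where $\bar u$ denotes the mean of $u$ on $D_K$. Using $-\Delta u=0$, a direct computation shows that $v$ satisfies
\begin{align*}
-\Delta v &= -2\nabla\chi\cdot\bsigma-(u-\bar u)\Delta\chi =:F \qquad\text{in }\Omega,\\
\nabla v\cdot\bn &= \chi\,g+(u-\bar u)(\nabla\chi\cdot\bn) =:G \qquad\text{on }\Gamma,
\end{align*}
with the compatibility condition $\int_\Omega F+\int_\Gamma G=0$ satisfied automatically by the divergence theorem. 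After subtracting the mean of $v$, the regularity estimate \eqref{eq:reg} yields $\|v\|_{H^{3/2+s}(\Omega)}\lesssim \|F\|_{H^{-1/2+s}(\Omega)}+\|G\|_{H^{s}(\Gamma)}$. Since $\chi\equiv 1$ on $K$, $\bsigma=\nabla v$ there, so the left-hand side controls $\|\bsigma\|_{H^{1/2+s}(K)}$.

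Each term on the right-hand side is then estimated while tracking the $h_K$-scaling via the reference-patch argument. Using the embedding $L^2\hookrightarrow H^{-1/2+s}$ (valid for $s<1/2$), the derivative bounds on $\chi$, and Poincar\'e's inequality on $D_K$ (which gives $\|u-\bar u\|_{L^2(D_K)}\lesssim h_K\|\bsigma\|_{L^2(D_K)}$), one obtains $\|F\|_{H^{-1/2+s}(\Omega)}\lesssim h_K^{-1/2-s}\|\bsigma\|_{L^2(D_K)}$ once the Jacobian of the rescaling is accounted for. For the boundary term a Leibniz-type estimate for the Slobodetskij norm gives $\|\chi g\|_{H^s(\Gamma)}\lesssim \|g\|_{H^s(\partial D_K\cap\Gamma)}+h_K^{-s}\|g\|_{L^2(\partial D_K\cap\Gamma)}$, and the remaining term $(u-\bar u)(\nabla\chi\cdot\bn)$ is controlled by combining the (rescaled) trace inequality on $D_K$, Poincar\'e, and the bound on $|\nabla\chi|$, producing a contribution absorbed into $h_K^{-1/2-s}\|\bsigma\|_{L^2(D_K)}$.

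The main obstacle is the scaling bookkeeping for the fractional norms $H^{-1/2+s}(\Omega)$ and $H^s(\Gamma)$: they are not additive over subdomains and do not transform under dilation by a single clean power of $h_K$. Performing all estimates first on the fixed reference patch (on which \eqref{eq:reg} applies with dimensionless constants) and only then scaling back by $h_K$ sidesteps this issue, since under such a dilation the correct powers of $h_K$ can be read off from the order of differentiation and the dimensions of the domains involved. Once this rescaling is executed cleanly, the remainder is a routine cutoff computation.
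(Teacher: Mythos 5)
Your overall skeleton (cut off by $\omega_K$, subtract the mean of $u$ over $D_K$, apply the global regularity estimate \eqref{eq:reg} to the localized function, then estimate the two right-hand side terms) is the same as the paper's. However, the step on which the whole theorem turns is exactly the one you defer to ``scaling bookkeeping,'' and the mechanism you propose for it does not work. The crude embedding $L^2(\Omega)\hookrightarrow H^{-1/2+s}(\Omega)$ applied to $F=-2\nabla\chi\cdot\bsigma-(u-\bar u)\Delta\chi$ only gives $\|F\|_{H^{-1/2+s}(\Omega)}\lesssim \|F\|_{L^2(D_K)}\lesssim h_K^{-1}\|\bsigma\|_{L^2(D_K)}$, which is short of the required $h_K^{-1/2-s}$ by a factor $h_K^{1/2-s}$ (recall $s<1/2$). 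You cannot recover this factor by ``rescaling to a reference patch on which \eqref{eq:reg} applies with dimensionless constants'': the estimate \eqref{eq:reg} is a regularity statement for the Neumann problem on the fixed polyhedron $\Omega$, with an exponent $s$ dictated by the edge and corner singularities of $\Omega$; it has no analogue on a unit-size copy of $D_K$, and the dual norm $\|F\|_{H^{-1/2+s}(\Omega)}$ of a function supported in $D_K$ is a global quantity that neither localizes to $D_K$ nor transforms by a single power of $h_K$ under a dilation of the patch alone. (Your preliminary claim that shape-regular vertex patches fall into a finite list of reference configurations is also unjustified for general locally refined meshes, but that is secondary.)

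The paper closes this gap by estimating $\|F\|_{H^{-1/2+s}(\Omega)}$ through duality rather than embedding or rescaling: pair $F$ against $v\in H^{1/2-s}(\Omega)$ and exploit that only the behaviour of $v$ on the small set $D_K$ enters. When $\partial D_K$ contains no boundary face, $\int_\Omega F=0$ (by the divergence theorem, since the conormal derivative of $\omega u$ vanishes on $\Gamma$), so one may replace $v$ by $v-m(v)$ with $m(v)$ its mean over $D_K$ and use the scaled Poincar\'e/interpolation inequality $\|v-m(v)\|_{L^2(D_K)}\lesssim h_K^{1/2-s}\|v\|_{H^{1/2-s}(D_K)}$; when $\partial D_K$ meets $\Gamma$ a Friedrichs-type inequality of the form \eqref{eq:Fr} plays the same role. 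Either way this produces precisely the factor $h_K^{1/2-s}$ your argument is missing. Your treatment of the boundary term $G$ (Leibniz estimate for $\|\chi g\|_{H^s(\Gamma)}$ and a scaled trace inequality for $(u-\bar u)\nabla\chi\cdot\bn$) does match the paper, as does the final use of $\|u-\bar u\|_{L^2(D_K)}\lesssim h_K\|\bsigma\|_{L^2(D_K)}$; but without the duality argument for the negative-order norm the proof is incomplete.
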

The proof of this result is contained in Appendix \ref{sectionlocal}.

\begin{proof}[Proof of Theorem \ref{the:MAIN1}]
Let $u$  satisfy
\begin{eqnarray*}
-\triangle u=&0 \qquad \text{ on } \Omega, \\
\nabla u \cdot \bn =&g_h \qquad  \text{ on } \Gamma,
\end{eqnarray*}
and set $\bsigma=\nabla u\in (H^{1/2+s}(\Omega))^3$ (see \eqref{eq:reg}). Note that $\dive \bsigma=0$. We define $\bsigma_h =\Pi \bsigma$ and we note that Theorem \ref{the:MAIN1} (a) holds, and that by Proposition \ref{prop:2.1}(a), $\mathrm{div}\,\boldsymbol\sigma_h=0$. Therefore, using elliptic regularity, Proposition \ref{prop:2.1}(b) and Lemma \ref{lemma:1}, 
\begin{alignat*}{6}
\|\bsigma_h\|_{H(\text{div};\Omega)} =&\|\bsigma_h\|_{L^2(\Omega)}\le \|\bsigma\|_{L^2(\Omega)}+\|\bsigma- \bsigma_h\|_{L^2(\Omega)} \\
	\lesssim &  \|g_h\|_{H^{-1/2}(\Gamma)}+\|\bsigma- \bsigma_h\|_{L^2(\Omega)}\\
	\lesssim & \|g_h\|_{H^{-1/2}(\Gamma)}+(\sum_{K \in \Oh} h_K^{2(1/2+s)} \|\bsigma\|_{H^{1/2+s}(K)}^2)^{1/2}\\    
	\lesssim &  \|g_h\|_{H^{-1/2}(\Gamma)} \\
		  & +\left(\sum_{K \in \Oh} \|\bsigma\|_{{L^2}(D_K)}^2
			 +h_K^{2(1/2+s)} \|g_h\|_{H^{s}(\partial D_K \cap \Gamma)}^2
			+h_K \|g_h\|_{L^2(\partial D_K \cap \Gamma)}^2\right)^{\!\!1/2} .
\end{alignat*}
The shape regularity of the elements means that
\begin{equation*}
\sum_{K \in \Oh} \|\bsigma\|_{L^2(D_K)}^2 \lesssim \|\bsigma\|_{L^2(\Omega)}^2.
\end{equation*}
Also, if we let $D_F$ to be the macro-element surrounding $F$ (triangles sharing a vertex with $F$), we have that
\begin{alignat*}{6}
\sum_{K \in \Oh} h_K^{2(1/2+s)} \|g_h\|_{H^{s}(\partial D_K \cap \Gamma)}^2 
	\lesssim  & \sum_{F\in  \Gamma_h} h_F^{2(1/2+s)} \|g_h\|_{H^{s}(D_F)}^2\\
	\lesssim &  \sum_{F\in  \Gamma_h} h_F \|g_h\|_{L^2(D_F)}^2 
	\lesssim  \sum_{F\in \Gamma_h} h_F \|g_h\|_{L^2(F)}^2,
\end{alignat*}
where we have used a standard local inverse estimate for piecewise polynomial functions. 
Applying Lemma \ref{lemma:1} completes the proof.
\end{proof}

As an application we can get an error estimate for the Laplacian in mixed form with Neumann boundary conditions. In this case $X= H(\text{div}, \Omega) \times L^2(\Omega)$  and the trace space is $M= H^{-1/2}(\partial \Omega)$ with trace operator $\gamma (\boldsymbol \sigma, u)= \boldsymbol  \sigma \cdot \boldsymbol n.$ The bilinear form $B$ and the linear form $F$ are given by
\begin{equation*}
B(( \boldsymbol \sigma, u), (\boldsymbol  \eta, w)):=\int_{\Omega} (\boldsymbol  \sigma \cdot \eta - u \, \mathrm{div}\, \boldsymbol  \eta + w\, \mathrm{div}\, \boldsymbol  \sigma) \,  
\end{equation*}
\begin{equation*}
F((\boldsymbol  \eta,w)):= \int_{\Omega} f w 
\end{equation*}
for a given $f \in L^2$.
Of course, the finite element space will be $X_h= \boldsymbol   V_h \times U_h$ where
\begin{equation*}
U_h  =  \{ w:\Omega\to\mathbb R\,:\, w|_K\in \pol^k(K)\quad \forall K \in \Oh\}.
\end{equation*}

In view of Theorem \ref{the:MAIN1} and the introductory discussion, we have the
following error estimates for Raviart-Thomas elements. 
\begin{corollary}
Let $X= H(\mathrm{div}, \Omega) \times L^2(\Omega)$ , $M= H^{-1/2}(\partial \Omega)$ and $X_h= \boldsymbol   V_h \times U_h$. Let $(\sigma, u)$ satisfy \eqref{continuousB} and $(\sigma_h, u_h)$ satisfy \eqref{discreteB} then the following holds 
\[
\|\boldsymbol\sigma-\boldsymbol\sigma_h\|_{\boldsymbol H(\mathrm{div};\Omega)}
+\|u-u_h\|_{L^2(\Omega)}
\lesssim \inf_{\boldsymbol v\in \boldsymbol V_h}\|\boldsymbol \sigma-\boldsymbol v\|_{\boldsymbol H(\mathrm{div};\Omega)}
+\inf_{w\in U_h} \| u-w\|_{L^2(\Omega)}
+\| g-g_h\|_{H^{-1/2}(\Gamma)}.
\]
\end{corollary}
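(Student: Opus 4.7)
The plan is to apply the abstract error-analysis framework laid out between equations \eqref{continuousB} and \eqref{introestimate} of the introduction. That framework shows that whenever (i) the bilinear form $B$ is continuous on $X\times X$, (ii) the discrete inf-sup condition \eqref{disinfsup} holds uniformly in $h$, and (iii) there exists a uniformly bounded discrete extension $L_h:M_h\to X_h$ satisfying $(P1)$-$(P2)$, one has
\[
\|(\boldsymbol\sigma-\boldsymbol\sigma_h,u-u_h)\|_X \lesssim \inf_{(\boldsymbol v,w)\in X_h}\|(\boldsymbol\sigma-\boldsymbol v,u-w)\|_X+\|g-g_h\|_{H^{-1/2}(\Gamma)}.
\]

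Items (i) and (ii) are classical for the mixed Laplacian with Raviart--Thomas elements: continuity of $B$ is a direct application of the Cauchy--Schwarz inequality to each of its three integral terms, and the discrete inf-sup stability on $X_{h0}\times X_{h0}$ follows from the standard Fortin projection together with the identity $\mathrm{div}\,\boldsymbol V_h=U_h$, yielding a constant $\beta$ independent of $h$.

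The new content is item (iii), and this is where Theorem \ref{the:MAIN1} enters. Given $g_h\in M_h$, Theorem \ref{the:MAIN1}, combined with the mean-value reduction of \cite[Theorem~5.1]{MaMeSa} that is flagged in the paragraph preceding its statement, delivers $\boldsymbol\sigma_h\in \boldsymbol V_h$ with $\boldsymbol\sigma_h\cdot\boldsymbol n=g_h$ on $\Gamma$ and $\|\boldsymbol\sigma_h\|_{H(\mathrm{div};\Omega)}\lesssim\|g_h\|_{H^{-1/2}(\Gamma)}$. I would define
\[
L_h g_h:=(\boldsymbol\sigma_h,0)\in \boldsymbol V_h\times U_h=X_h,
\]
so that $(P1)$ becomes the identity $\gamma L_h g_h=\boldsymbol\sigma_h\cdot\boldsymbol n=g_h$ and $(P2)$ becomes part (b) of Theorem \ref{the:MAIN1}. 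Plugging $L_h$ into the derivation of \eqref{introestimate} then gives the displayed estimate above.

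To conclude, I would split the joint infimum: because $X_h=\boldsymbol V_h\times U_h$ is a product subspace of $X$ and the $X$-norm decouples across the two factors, the two components can be minimised independently, so
\[
\inf_{(\boldsymbol v,w)\in X_h}\!\big(\|\boldsymbol\sigma-\boldsymbol v\|_{H(\mathrm{div};\Omega)}+\|u-w\|_{L^2(\Omega)}\big)
=\inf_{\boldsymbol v\in\boldsymbol V_h}\|\boldsymbol\sigma-\boldsymbol v\|_{H(\mathrm{div};\Omega)}+\inf_{w\in U_h}\|u-w\|_{L^2(\Omega)},
\]
which is exactly the asserted bound. All of the analytic difficulty is concentrated in Theorem \ref{the:MAIN1}; conditional on that result, this corollary is a direct application of the abstract machinery, and the only subtlety to monitor is the mean-value compatibility of $g_h$, which is precisely what the \cite{MaMeSa} step is there to handle.
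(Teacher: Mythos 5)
Your proposal is correct and follows exactly the route the paper intends: the paper offers no written proof beyond the sentence that the corollary follows from Theorem \ref{the:MAIN1} together with the introductory discussion, and your write-up simply fleshes out that argument (the extension $L_h g_h=(\boldsymbol\sigma_h,0)$ verifying $(P1)$--$(P2)$, the classical continuity and inf-sup facts, the mean-value reduction via \cite{MaMeSa}, and the splitting of the product infimum). Nothing further is needed.
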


Finally, we note that other applications of the existence of an extension
operator like $ L_h$ are given in the analysis of a variety of discretization
methods for the Stokes-Darcy problem~\cite{GaOySa:2012,MaMeSa}.

\section{Discrete Extension Operators for N\'ed\'elec Finite Element Spaces}

The proof of Theorem \ref{the:MAIN2} relies on a Helmholtz-Hodge
type decomposition of $\boldsymbol R_h$, two liftings (one for Lagrange finite
elements and the one provided by Theorem \ref{the:MAIN1}), and local estimates
in the space of Lagrange finite elements on the boundary
\[
P_h:=\{ \phi_h \in \mathcal C(\Gamma)\,:\,\phi_h|_F \in \pol_{k+1}(F) \text{ for all } F\in \Gamma_h\}.
\]
We begin with some technical results:
\begin{lemma}\label{lemma:44.1}
If $\boldsymbol r_h \in \boldsymbol R_h$ satisfies $\mathrm{div}_\Gamma \boldsymbol r_h=0$, then $\boldsymbol r_h =\mathrm{curl}_\Gamma \phi_h$ with $\phi_h \in P_h$.
\end{lemma}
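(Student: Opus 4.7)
The plan is to apply the 2D de Rham exact sequence face by face on $\Gamma_h$ and then glue the resulting local stream functions into a single globally continuous piecewise polynomial. First I would observe that the $H(\mathrm{div}_\Gamma)$-conformity built into the definition of $\boldsymbol R_h$ means the edge-normal components of $\boldsymbol r_h$ match across every edge of $\Gamma_h$, so the distributional surface divergence $\mathrm{div}_\Gamma \boldsymbol r_h$ has no singular contributions concentrated on edges. Consequently, the hypothesis $\mathrm{div}_\Gamma \boldsymbol r_h=0$ forces the face-wise divergence to vanish pointwise, i.e.\ $\mathrm{div}(\boldsymbol r_h|_F)=0$ on each $F\in\Gamma_h$. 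The standard 2D exact sequence for the Raviart-Thomas space $[\pol^k(F)]^2+\pol^k(F)\bld{x}_t$ then gives, on each face $F$, a function $\psi_F\in\pol^{k+1}(F)$, unique up to an additive constant, with $\mathrm{curl}_F \psi_F=\boldsymbol r_h|_F$.

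Next I would carry out the edge-compatibility analysis. For any interior edge $e=F_1\cap F_2$, the equality of the edge-normal components of $\boldsymbol r_h$ across $e$ translates into $\partial_{\bt}(\psi_{F_1}-\psi_{F_2})=0$ on $e$, where $\bt$ is a unit tangent to $e$; hence $\psi_{F_1}|_e$ and $\psi_{F_2}|_e$ differ only by an $e$-dependent constant. The remaining task is to select the free additive constants on each face so that these edge-constants collapse to zero. I would do this by fixing a reference vertex $v_0\in\Gamma_h$, setting $\phi_h(v_0)=0$, and defining $\phi_h$ at any other vertex $v$ by accumulating increments $\psi_F(v_{i+1})-\psi_F(v_i)$ along an arbitrary edge-path from $v_0$ to $v$; by the previous remark each increment depends only on $\boldsymbol r_h$ along the edge, not on the face $F$ containing it. Once vertex values are fixed, I would pin down the constant in each $\psi_F$ so that $\psi_F$ matches $\phi_h$ at the vertices of $F$. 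The resulting $\phi_h$ is continuous across every edge, piecewise of degree $k+1$, and hence in $P_h$; by construction $\mathrm{curl}_\Gamma \phi_h=\boldsymbol r_h$.

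The principal obstacle is justifying the path-independence of $\phi_h(v)$. I would resolve this by a discrete Stokes argument: the closing defect around a loop $\gamma$ of mesh edges bounding a collection of faces $\mathcal F_\gamma\subset\Gamma_h$ equals, up to sign, $\sum_{F\in\mathcal F_\gamma}\int_F \mathrm{div}(\boldsymbol r_h|_F)=0$ by Step~1. This delivers path-independence provided every closed edge-path on $\Gamma_h$ is null-homologous in $\Gamma$, which holds under the (implicit) topological hypothesis that $\Gamma$ is simply connected, as is the case when $\Omega$ is a topological ball. In a more general topology one would additionally have to subtract off a finite-dimensional space of harmonic cocycle representatives before the gluing closes up, but in the setting of Theorem~\ref{the:MAIN2} such a decomposition is absorbed into the Hodge splitting step that surrounds Lemma~\ref{lemma:44.1}.
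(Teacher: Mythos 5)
Your proof is correct (modulo the same implicit topological assumption the paper itself makes), but it follows a genuinely different route from the paper's. The paper never works intrinsically on $\Gamma$: it picks a N\'ed\'elec preimage $\boldsymbol w_h$ of $\boldsymbol r_h$, observes that $\boldsymbol q_h=\nabla\times\boldsymbol w_h\in\boldsymbol V_h$ has vanishing normal trace (since $\boldsymbol q_h\cdot\boldsymbol n=\mathrm{div}_\Gamma\boldsymbol r_h=0$), invokes exactness of the three-dimensional discrete de~Rham complex with essential boundary conditions to write $\boldsymbol q_h=\nabla\times\boldsymbol w_h^0$ with $\boldsymbol w_h^0\times\boldsymbol n=0$, and then uses exactness once more to write the curl-free field $\boldsymbol w_h-\boldsymbol w_h^0$ as $\nabla u_h$ with $u_h\in W_h$; the stream function is simply $\phi_h=u_h|_\Gamma$. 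That argument is short, delegates all gluing and topology to the known exactness of the 3D complex (the reference to Arnold--Falk--Winther), and makes $\phi_h\in P_h$ immediate as a trace of a Lagrange function. Your construction is purely two-dimensional and self-contained: local stream functions from the face-wise exact sequence, conormal continuity translated into tangential-derivative matching, and a discrete monodromy argument to fix the additive constants. This is more elementary and makes the topological obstruction explicit and sharp --- your gluing needs $H^1(\Gamma)$ trivial (genus zero), whereas the paper's proof needs exactness of the 3D complex at two spots (roughly, $\Omega$ simply connected and $\Gamma$ connected); neither hypothesis is stated in the paper, which only assumes a connected polyhedral Lipschitz domain, so you are if anything more careful than the source on this point. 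Two small things worth tightening in a final write-up: (i) when you ``pin down the constant in each $\psi_F$ so that $\psi_F$ matches $\phi_h$ at the vertices of $F$,'' note explicitly that a single constant suffices because the vertex-to-vertex increments of $\phi_h$ along the edges of $F$ were themselves computed from $\psi_F$, so the three discrepancies coincide; and (ii) your closing remark that higher-genus harmonic cocycles would be ``absorbed into the Hodge splitting'' is optimistic --- for genus $g\ge 1$ the lemma as stated is actually false (there exist divergence-free $\boldsymbol r_h$ that are not surface curls), so the caveat is a genuine restriction on the statement, not merely on the proof.
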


\begin{proof}
By definition of $\boldsymbol R_h$, there exists $\boldsymbol w_h \in \boldsymbol N_h$ such that $\boldsymbol w_h\times \boldsymbol n=\boldsymbol r_h$. Note now that
\[
\boldsymbol q_h:=\nabla \times \boldsymbol w_h \in \boldsymbol V_h, \qquad \boldsymbol q_h\cdot\boldsymbol n=\mathrm{div}_\Gamma (\boldsymbol w_h \times \boldsymbol n)=0.
\]
Therefore, by the exactness of the discrete de-Rham complex with essential boundary conditions (see for example \cite{AFW}), there exists $\boldsymbol w_h^0\in \boldsymbol N_h$ such that $\nabla \times \boldsymbol w_h^0=\boldsymbol q_h$ and $\boldsymbol w_h^0\times \boldsymbol n=0$. We then consider the difference $\boldsymbol w_h-\boldsymbol w_h^0$ and note, again, by the exactness of the discrete de-Rham complex, there exists $u_h$ in the finite element space
\begin{equation}\label{eq:44.0}
W_h:=\{ u_h \in \mathcal C(\Omega)\,:\, u_h|_K\in \pol_{k+1}(K) \text{ for all } K\in \Oh\},
\end{equation}
satisfying $\nabla u_h=\boldsymbol w_h-\boldsymbol w_h^0$. Take  $\phi_h=u_h|_\Gamma$. The result follows since $\boldsymbol w_h\times \boldsymbol n=\nabla u_h\times \boldsymbol n=\mathrm{curl}_\Gamma \phi_h$.
\end{proof}

We will also need the following result found for example in \cite{BuffaCostabelSheen}.
\begin{lemma}\label{lemma:44.2}
For all $\phi \in H^{1/2}(\Gamma)$ we have 
\begin{equation*}
\|\phi\|_{H^{1/2}(\Gamma)/{\mathbb R}} \lesssim \| \mathrm{curl}_\Gamma \phi\|_{H_{\|}^{-1/2}(\Gamma)}.
\end{equation*}
\end{lemma}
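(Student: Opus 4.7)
The plan is to establish this Poincaré-type inequality by duality. Since the dual of $H^{1/2}(\Gamma)/\mathbb R$ is identified with the zero-mean subspace of $H^{-1/2}(\Gamma)$, one has
\begin{equation*}
\|\phi\|_{H^{1/2}(\Gamma)/\mathbb R}
=\sup\Bigl\{\textstyle\int_\Gamma\phi\,\psi : \psi\in H^{-1/2}(\Gamma),\,\int_\Gamma\psi=0,\,\|\psi\|_{H^{-1/2}(\Gamma)}=1\Bigr\},
\end{equation*}
so it suffices to control each pairing $\int_\Gamma\phi\,\psi$ uniformly by a multiple of $\|\mathrm{curl}_\Gamma\phi\|_{{\boldsymbol H}_{\|}^{-1/2}(\Gamma)}\,\|\psi\|_{H^{-1/2}(\Gamma)}$.

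For an admissible $\psi$, I would lift it to $\Omega$ by solving the compatible Neumann problem $-\Delta V=0$ in $\Omega$, $\partial_{\boldsymbol n}V=\psi$ on $\Gamma$, $\int_\Omega V=0$, for which standard elliptic theory on Lipschitz domains provides $\|V\|_{H^1(\Omega)}\lesssim\|\psi\|_{H^{-1/2}(\Gamma)}$. The field $\boldsymbol q:=\nabla V\in L^2(\Omega)^3$ is curl-free and (since $V$ is harmonic) divergence-free, so the classical stable vector-potential theorem on a Lipschitz polyhedron (Girault--Raviart; Amrouche--Bernardi--Dauge--Girault) furnishes $\boldsymbol z\in H^1(\Omega)^3$ with $\mathrm{curl}\,\boldsymbol z=\boldsymbol q$ and $\|\boldsymbol z\|_{H^1(\Omega)}\lesssim\|\boldsymbol q\|_{L^2(\Omega)}\lesssim\|\psi\|_{H^{-1/2}(\Gamma)}$.

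With the lift in hand, let $u\in H^1(\Omega)$ be any extension of $\phi$. Green's identity for the harmonic $V$ gives
\begin{equation*}
\int_\Gamma\phi\,\psi=\int_\Omega\nabla u\cdot\nabla V=\int_\Omega\nabla u\cdot\mathrm{curl}\,\boldsymbol z,
\end{equation*}
and integrating the curl by parts, using $\mathrm{curl}\,\nabla u=0$ together with the identification $\nabla u\times\boldsymbol n|_\Gamma=\mathrm{curl}_\Gamma\phi$, recasts this as the BCS surface pairing of $\mathrm{curl}_\Gamma\phi$ against the tangential trace of $\boldsymbol z$. Because $\mathrm{div}_\Gamma\mathrm{curl}_\Gamma\phi=0$, Cauchy--Schwarz in the duality between ${\boldsymbol H}_{\|}^{-1/2}(\mathrm{div}_\Gamma;\Gamma)$ and ${\boldsymbol H}_{\|}^{-1/2}(\mathrm{curl}_\Gamma;\Gamma)$, combined with continuity of the tangential trace $H^1(\Omega)^3\to{\boldsymbol H}_{\|}^{1/2}(\Gamma)\hookrightarrow{\boldsymbol H}_{\|}^{-1/2}(\mathrm{curl}_\Gamma;\Gamma)$, yields
\begin{equation*}
\Bigl|\int_\Gamma\phi\,\psi\Bigr|\lesssim\|\mathrm{curl}_\Gamma\phi\|_{{\boldsymbol H}_{\|}^{-1/2}(\Gamma)}\,\|\boldsymbol z\|_{H^1(\Omega)}\lesssim\|\mathrm{curl}_\Gamma\phi\|_{{\boldsymbol H}_{\|}^{-1/2}(\Gamma)}\,\|\psi\|_{H^{-1/2}(\Gamma)}.
\end{equation*}
Taking the supremum over $\psi$ then produces the claim. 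Note that the chosen extension $u$ does not appear in the final bound: it serves only as a bridge between the boundary pairing $\int_\Gamma\phi\,\psi$ and the interior identity $\int_\Omega\nabla u\cdot\mathrm{curl}\,\boldsymbol z$.

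The main technical ingredient is the stable vector potential $\boldsymbol z$ on a polyhedral Lipschitz domain. On a simply connected $\Omega$ this is classical; in the presence of non-trivial topology one first subtracts a bounded projection of $\boldsymbol q$ onto the finite-dimensional space of harmonic Neumann fields (a bounded $L^2$ operation that preserves scaling) to restore solvability before invoking the potential theorem. Everything else in the argument is duality and elementary integration by parts.
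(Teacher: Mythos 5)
The paper does not actually prove Lemma \ref{lemma:44.2}: it is quoted as a known fact with a pointer to Buffa--Costabel--Sheen, so there is no in-paper argument to match yours against. Your duality proof is, however, essentially the standard route to such an inequality and is sound in substance: identify the dual of $H^{1/2}(\Gamma)/\mathbb{R}$ with the mean-zero part of $H^{-1/2}(\Gamma)$, realise each admissible $\psi$ as $\nabla V\cdot\boldsymbol n$ for a harmonic $V$ with $\|V\|_{H^1(\Omega)}\lesssim\|\psi\|_{H^{-1/2}(\Gamma)}$, write $\nabla V=\mathrm{curl}\,\boldsymbol z$ with a stable $H^1(\Omega)^3$ vector potential, and shift the curl onto the boundary to expose the pairing of $\mathrm{curl}_\Gamma\phi$ with the tangential trace of $\boldsymbol z$. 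Two points deserve correction. First, the duality partner of ${\boldsymbol H}_{\|}^{-1/2}(\mathrm{div}_\Gamma;\Gamma)$ in the Buffa--Ciarlet framework is ${\boldsymbol H}_{\perp}^{-1/2}(\mathrm{curl}_\Gamma;\Gamma)$, not ${\boldsymbol H}_{\|}^{-1/2}(\mathrm{curl}_\Gamma;\Gamma)$; in fact, since $\boldsymbol z\in H^1(\Omega)^3$ you can simply pair $\mathrm{curl}_\Gamma\phi\in{\boldsymbol H}_{\|}^{-1/2}(\Gamma)$ against the $H^{1/2}$ tangential trace of $\boldsymbol z$ and avoid the $\mathrm{div}_\Gamma$--$\mathrm{curl}_\Gamma$ duality altogether. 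Second, your topological aside is misdirected: the obstruction to the existence of a vector potential for a divergence-free field is measured by the harmonic \emph{Dirichlet} fields, equivalently by the fluxes $\int_{\Gamma_i}\boldsymbol q\cdot\boldsymbol n$ through the connected components of $\Gamma$, not by the harmonic Neumann fields; moreover, subtracting anything from $\boldsymbol q$ would destroy the identity $\int_\Gamma\phi\,\psi=\int_\Omega\nabla u\cdot\boldsymbol q$ on which the whole argument rests. This is harmless here only because the lemma implicitly requires $\Gamma$ to be connected (otherwise $\mathrm{curl}_\Gamma$ annihilates all locally constant functions and the stated inequality is false as written), and for connected $\Gamma$ the single flux condition $\int_\Gamma\psi=0$ is exactly your admissibility constraint, so the potential $\boldsymbol z$ exists with the claimed bound on a Lipschitz polyhedron. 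With those repairs your argument stands as a legitimate self-contained proof of the cited result.
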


\begin{lemma}\label{lemma:44.3}
For all $\boldsymbol v_h\in \boldsymbol V_h$ with $\mathrm{div}\, \boldsymbol v_h=0$, there exists $\boldsymbol w_h\in \boldsymbol N_h$ such that $\nabla\times \boldsymbol w_h=\boldsymbol v_h$ and $\| \boldsymbol w_h\|_{L^2(\Omega)} \lesssim \| \boldsymbol v_h\|_{L^2(\Omega)}.$
\end{lemma}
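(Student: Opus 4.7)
The strategy is to combine a continuous regular decomposition of $\boldsymbol v_h$ with a bounded commuting quasi-interpolation onto $\boldsymbol N_h$, in the spirit of the discrete de-Rham complex framework already invoked in Lemma \ref{lemma:44.1}.

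First, since $\boldsymbol v_h\in H(\mathrm{div};\Omega)$ with $\mathrm{div}\,\boldsymbol v_h=0$ on the Lipschitz polyhedron $\Omega$, classical vector potential theory (Costabel--McIntosh; or Girault--Raviart in the simply connected case) supplies a $\boldsymbol\psi\in H^1(\Omega)^3$ with
\begin{equation*}
\nabla\times\boldsymbol\psi=\boldsymbol v_h\quad\text{in }\Omega,\qquad \|\boldsymbol\psi\|_{H^1(\Omega)}\lesssim \|\boldsymbol v_h\|_{L^2(\Omega)}.
\end{equation*}
If $\Omega$ is not simply connected, the existence of such a $\boldsymbol\psi$ additionally requires that the fluxes of $\boldsymbol v_h$ through the generators of the relative homology $H_2(\Omega,\Gamma)$ vanish; these fluxes can be removed by subtracting from $\boldsymbol v_h$ a suitable element of the finite-dimensional space of harmonic Neumann fields, which already lies in $\boldsymbol V_h$ and is the curl of an explicit piecewise affine element of $\boldsymbol N_h$.

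Next, we apply a bounded commuting quasi-interpolation pair
\begin{equation*}
\bPi_N\colon H^1(\Omega)^3\to \boldsymbol N_h,\qquad \bPi_V\colon L^2(\Omega)^3\to \boldsymbol V_h,
\end{equation*}
of the smoothed type developed by Sch\"oberl and by Christiansen--Winther and used systematically in finite element exterior calculus \cite{AFW}. The properties to be invoked are the commutation $\nabla\times\bPi_N=\bPi_V\,\nabla\times$, the reproduction $\bPi_V\boldsymbol q_h=\boldsymbol q_h$ for every $\boldsymbol q_h\in \boldsymbol V_h$, and the uniform bound
\begin{equation*}
\|\bPi_N\boldsymbol\psi\|_{L^2(\Omega)}\lesssim \|\boldsymbol\psi\|_{H^1(\Omega)},
\end{equation*}
with a constant depending only on the shape regularity of $\Oh$. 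Setting $\boldsymbol w_h:=\bPi_N\boldsymbol\psi$ then gives
\begin{equation*}
\nabla\times\boldsymbol w_h=\bPi_V(\nabla\times\boldsymbol\psi)=\bPi_V\boldsymbol v_h=\boldsymbol v_h,
\end{equation*}
together with $\|\boldsymbol w_h\|_{L^2(\Omega)}\lesssim \|\boldsymbol\psi\|_{H^1(\Omega)}\lesssim \|\boldsymbol v_h\|_{L^2(\Omega)}$, which is the desired conclusion.

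The main technical obstacle is securing such a commuting projection with $H^1$-to-$L^2$ continuity on a \emph{general shape-regular} (not quasi-uniform) tetrahedral mesh, since the canonical N\'ed\'elec interpolant requires $H^{1/2+\varepsilon}$ regularity and is not uniformly bounded on non-quasiuniform meshes. This is precisely the scenario addressed by the smoothed projection constructions, whose bounds depend only on the local shape-regularity constant and hence transfer directly to the meshes considered here. An essentially equivalent route, which avoids explicitly invoking a regular decomposition, is to pick any $\boldsymbol w_h\in\boldsymbol N_h$ with $\nabla\times\boldsymbol w_h=\boldsymbol v_h$ (available from the exactness used in Lemma \ref{lemma:44.1}), subtract off a discrete gradient so that the result is $L^2$-orthogonal to $\nabla W_h$, and apply a uniform discrete Friedrichs inequality on $\boldsymbol N_h\cap(\nabla W_h)^{\perp}$; the same smoothed projection machinery underlies the proof of that inequality.
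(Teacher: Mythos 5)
Your argument is essentially the paper's own proof: the paper likewise takes a bounded right-inverse of the curl (yielding a potential $\boldsymbol w\in H(\mathrm{curl},\Omega)$ rather than your $H^1$ vector potential) and then applies the uniformly bounded commuting projections of Christiansen--Winther \cite{ChWi:2008}, using the commuting diagram and the reproduction of $\boldsymbol V_h$ exactly as you do to get $\nabla\times\boldsymbol w_h=\boldsymbol v_h$ and the $L^2$ bound. The only difference is cosmetic (an $H^1$-regular potential plus $H^1\to L^2$ boundedness versus an $H(\mathrm{curl})$ potential plus $H(\mathrm{curl})$ boundedness), so the two proofs coincide in substance.
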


\begin{proof}
By \cite{ChWi:2008} uniformly bounded projections $\boldsymbol\Pi_h:H(\mathrm{curl},\Omega)\to \boldsymbol N_h$ and $\boldsymbol Q_h:H(\mathrm{div},\Omega)\to\boldsymbol V_h$ exist such that $\nabla\times \boldsymbol\Pi_h \boldsymbol w=\boldsymbol Q_h(\nabla\times \boldsymbol w)$ for all $\boldsymbol w\in H(\mathrm{curl},\Omega)$. On the other hand the curl operator is surjective from $H(\mathrm{curl},\Omega)$ to $\{ \boldsymbol v\in  H(\mathrm{div},\Omega) \,:\, \mathrm{div}\boldsymbol v=0\}$ and has therefore a bounded right-inverse. We then apply this right-inverse to $\boldsymbol v_h$ to obtain $\boldsymbol w\in H(\mathrm{curl},\Omega)$ and define $\boldsymbol w_h=\boldsymbol\Pi_h\boldsymbol w$. Then $\nabla \times\boldsymbol w_h=\boldsymbol Q_h(\nabla\times \boldsymbol w)=\boldsymbol Q_h\boldsymbol v_h=\boldsymbol v_h$ and the result is proved.
\end{proof}

\begin{proof}[Proof of Theorem \ref{the:MAIN2}]
Let $\boldsymbol r_h \in \boldsymbol R_h$. Then $\mathrm{div}_\Gamma \boldsymbol r_h\in M_h^0$ and by Theorem \ref{the:MAIN1} we can find $\boldsymbol v_h \in \boldsymbol V_h$ such that
\[
\mathrm{div}\,\boldsymbol v_h =0, \qquad \boldsymbol v_h\cdot\boldsymbol n=\mathrm{div}_\Gamma \boldsymbol r_h,
\]
and 
\[
\| \boldsymbol v_h\|_{H(\mathrm{div},\Omega)}\lesssim \| \mathrm{div}\,\boldsymbol r_h\|_{H^{-1/2}(\Gamma)}.
\]
We then use Lemma \ref{lemma:44.3} to obtain $\boldsymbol w_h\in \boldsymbol N_h$ such that $\nabla \times \boldsymbol w_h =\boldsymbol v_h$, and 
\begin{equation}\label{eq:44.1}
\| \boldsymbol w_h\|_{H(\mathrm{curl},\Omega)}\lesssim
\| \boldsymbol v_h\|_{H(\mathrm{div},\Omega)}\lesssim \| \mathrm{div}\,\boldsymbol r_h\|_{H^{-1/2}(\Gamma)}.
\end{equation}
 Consider now the function $\boldsymbol m_h:=\boldsymbol r_h-\boldsymbol w_h\times \boldsymbol n \in \boldsymbol R_h$ and note that
\[
\|\boldsymbol m_h\|_{\revj{ \boldsymbol H_{\|}}^{-1/2}(\Gamma)}\lesssim \|\boldsymbol r_h\|_{\revj{ \boldsymbol H_{\|}}^{-1/2}(\Gamma)}+
\|\mathrm{div}_\Gamma\boldsymbol r_h\|_{H^{-1/2}(\Gamma)}
\]
by \eqref{eq:44.1} and the continuity of the tangential trace operator from $H(\mathrm{curl},\Omega)$ to $\revj{{\boldsymbol H}_{\|}^{-1/2} ({\rm div}_\Gamma; \Gamma)}$. Additionally
\[
\mathrm{div}_\Gamma \boldsymbol m_h=\mathrm{div}_\Gamma\boldsymbol r_h-(\nabla \times \boldsymbol w_h)\cdot\boldsymbol n=0,
\]
by construction of $\boldsymbol w_h$. We then apply Lemma \ref{lemma:44.1} to find $\phi_h \in P_h$ such that
$\boldsymbol r_h-\boldsymbol w_h\times \boldsymbol n=\mathrm{curl}_\Gamma\phi_h$ and and use Lemma \ref{lemma:44.2} to bound

\begin{equation}\label{eq:44.3}
\| \phi_h\|_{H^{1/2}(\Gamma)\revj{/{\mathbb R}}} \lesssim \|\boldsymbol r_h-\boldsymbol w_h\times\boldsymbol n\|_{\boldsymbol H_{\revj{\|}}^{-1/2}(\Gamma)}
\lesssim \|\boldsymbol r_h\|_{\boldsymbol H_{\revj{\|}}^{-1/2}(\Gamma)}+
\|\mathrm{div}_\Gamma\boldsymbol r_h\|_{H^{-1/2}(\Gamma)},
\end{equation}
\revj{ where  used \eqref{CURLcontinuity} and \eqref{eq:44.1}}.

We then take $u_h$ in the finite element space $W_h$ (see \eqref{eq:44.0}) such that $u_h|_\Gamma=\phi_h$ and 
\begin{equation}\label{eq:44.4}
\| u_h\|_{H^1(\Omega)\revj{/ \mathbb{R}}}\lesssim \| \phi_h\|_{H^{1/2}(\Gamma) \revj{/\mathbb{R}}}. 
\end{equation}
This can be accomplished by first taking $u\in H^1(\Omega)$ whose trace is $\phi_h$ and satisfying $\|u\|_{H^1(\Omega)\revj{/\mathbb{R}}}\lesssim \|\phi_h\|_{H^{1/2}(\Gamma)\revj{/\mathbb{R}}}$ and then applying the Scott-Zhang interpolation operator \cite{ScZh:1990} to $u$. The desired lifting of $\boldsymbol r_h$ is the function $\boldsymbol w_h+\nabla u_h \in N_h$. The bound
\[
\| \boldsymbol w_h+\nabla u_h\|_{H(\mathrm{curl},\Omega)}\le 
\|\boldsymbol w_h\|_{H(\mathrm{curl},\Omega)}+\|\nabla u_h\|_{L^2(\Omega)}\lesssim 
\|\boldsymbol r_h\|_{\revj{\boldsymbol H_{\|}}^{-1/2}(\Gamma)}+
\|\mathrm{div}_\Gamma\boldsymbol r_h\|_{H^{-1/2}(\Gamma)}
\]
is a direct consequence of \eqref{eq:44.1}, \eqref{eq:44.3}, and \eqref{eq:44.4}. The fact that it is a lifting follows from
\[
(\boldsymbol w_h+\nabla u_h)\times \boldsymbol n=\boldsymbol w_h\times \boldsymbol n+\mathrm{curl}_\Gamma \phi_h=
\boldsymbol w_h\times \boldsymbol n+\boldsymbol m_h=
\boldsymbol r_h.
\]
This finishes the proof.
\end{proof}

As an application to Theorem \ref{the:MAIN2} we consider problem \eqref{continuousB} with
\[
X=H(\text{curl}; \Omega),
\qquad
M=\boldsymbol H_{\revj{ \|}}^{-1/2}(\mathrm{div}_{\Gamma}; \partial \Omega):=\{ \mu \in \boldsymbol H_{\revj{\|}}^{-1/2}(\partial \Omega): \mathrm{div}_\Gamma \mu \in H^{-1/2}(\partial \Omega) \}. 
\]
The finite element space are the N\'ed\'elec elements $X_h=\boldsymbol N_h$. The bilinear form $B$ and linear form $F$ are as follows 
\begin{equation*}
B(\boldsymbol u, \boldsymbol v)=\int_\Omega (\nabla \times \boldsymbol u)\cdot(\nabla \times \boldsymbol v)+\boldsymbol u\cdot\boldsymbol v,
\end{equation*}
\begin{equation*}
F(\boldsymbol v)= \int_\Omega \boldsymbol f \cdot\boldsymbol v. 
\end{equation*}

Theorem \ref{the:MAIN2} and the Introductory discussion now gives the 
following error estimates for N\'ed\'elec elements. 
\begin{corollary}
Let $X= H(\mathrm{curl}; \Omega)$ , $M=\boldsymbol H_{\revj{\|}}^{-1/2}(\mathrm{div}_{\Gamma}; \partial \Omega)$ and $X_h= \boldsymbol N_h$. Let $\boldsymbol u$ satisfy \eqref{continuousB} and $\boldsymbol u_h$ satisfy \eqref{discreteB} then the following holds 
\[
\| \boldsymbol u-\boldsymbol u_h\|_{H(\mathrm{curl};\Omega)}\lesssim \inf_{\boldsymbol v \in \boldsymbol N_h} \| \boldsymbol u-\boldsymbol v\|_{H(\mathrm{curl};\Omega)} + \| \boldsymbol g-\boldsymbol g_h||_{\boldsymbol H_{\revj{\|}}^{-1/2}(\mathrm{div}_\Gamma;\Gamma)}.
\]
\end{corollary}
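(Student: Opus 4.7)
The plan is to recognize this corollary as a direct application of the abstract framework laid out in the introduction, with the discrete extension operator provided by Theorem \ref{the:MAIN2} playing the role of $L_h$. So rather than inventing new machinery, I will simply verify the hypotheses of that abstract discussion for the curl-curl problem and then specialize the resulting estimate \eqref{introestimate}.

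First I would check that the bilinear form $B(\boldsymbol u,\boldsymbol v)=\int_\Omega (\nabla\times\boldsymbol u)\cdot(\nabla\times\boldsymbol v)+\boldsymbol u\cdot\boldsymbol v$ is bounded and coercive on $X=H(\mathrm{curl};\Omega)$ (both constants equal to one, in fact), so that the continuous problem \eqref{continuousB} and discrete problem \eqref{discreteB} are well-posed with inf-sup and continuity constants $\beta,\kappa$ independent of $h$. This means the classical estimate \eqref{Cea} applies with a mesh-independent prefactor. Next, I would identify the trace operator $\gamma\boldsymbol v=\boldsymbol v\times\boldsymbol n$, with image space $M=\boldsymbol H_{\|}^{-1/2}(\mathrm{div}_\Gamma;\Gamma)$ and discrete image $M_h=\boldsymbol R_h$, and invoke the continuity bound \eqref{CURLcontinuity} to confirm that $\gamma:X\to M$ is continuous.

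Now comes the crucial step: Theorem \ref{the:MAIN2} precisely provides a bounded right-inverse $L_h:\boldsymbol R_h\to \boldsymbol N_h$ of $\gamma$, i.e.\ an operator satisfying properties $(P1)$ and $(P2)$ with constant $C_L$ depending only on the shape regularity of $\Oh$ and on $\Omega$. With $L_h$ in hand, I would follow the argument in the introduction verbatim. Given arbitrary $\boldsymbol v_h\in \boldsymbol N_h$, set
\[
\boldsymbol w_h=\boldsymbol v_h - L_h(\gamma \boldsymbol v_h - \boldsymbol g_h)\in \boldsymbol N_h,
\]
so that $\gamma \boldsymbol w_h=\boldsymbol g_h$ and therefore $\boldsymbol u_h-\boldsymbol w_h\in X_{h0}$. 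Plugging $\boldsymbol w_h$ into \eqref{Cea} and applying the triangle inequality together with $(P2)$ gives
\[
\|\boldsymbol u-\boldsymbol u_h\|_{H(\mathrm{curl};\Omega)}\lesssim \|\boldsymbol u-\boldsymbol v_h\|_{H(\mathrm{curl};\Omega)}+\|\gamma \boldsymbol v_h-\boldsymbol g_h\|_{M}.
\]
Finally, inserting $0=\gamma \boldsymbol u-\boldsymbol g$ inside the second term and using the continuity of $\gamma$ (via \eqref{CURLcontinuity}) yields
\[
\|\gamma \boldsymbol v_h-\boldsymbol g_h\|_M\lesssim \|\boldsymbol v_h-\boldsymbol u\|_{H(\mathrm{curl};\Omega)}+\|\boldsymbol g-\boldsymbol g_h\|_M.
\]
Combining the two displays and taking the infimum over $\boldsymbol v_h\in \boldsymbol N_h$ gives the claim.

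I do not anticipate any genuine obstacle here, since all the real work is already encapsulated in Theorem \ref{the:MAIN2}; the only bookkeeping points to be careful about are (i) confirming that $\boldsymbol g\in M$ makes sense because $\boldsymbol u\in H(\mathrm{curl};\Omega)$ and the trace is continuous into $M$, and (ii) noting that $L_h$ is only defined on $\boldsymbol R_h$, so we need $\gamma \boldsymbol v_h-\boldsymbol g_h\in \boldsymbol R_h$, which holds because both $\gamma\boldsymbol v_h$ and $\boldsymbol g_h$ lie in $\boldsymbol R_h$ by construction. With these small checks in place, the corollary is an immediate consequence of the abstract argument from the introduction specialized to $(X,M,X_h,M_h,L_h)=(H(\mathrm{curl};\Omega),\boldsymbol H_{\|}^{-1/2}(\mathrm{div}_\Gamma;\Gamma),\boldsymbol N_h,\boldsymbol R_h,L_h)$.
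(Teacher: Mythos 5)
Your proposal is correct and follows exactly the route the paper intends: the paper gives no separate proof of this corollary, stating only that it follows from Theorem \ref{the:MAIN2} together with the abstract argument of the introduction, which is precisely what you carry out (coercivity and continuity of $B$, continuity of the tangential trace via \eqref{CURLcontinuity}, and the extension operator from Theorem \ref{the:MAIN2} playing the role of $L_h$ in the derivation of \eqref{introestimate}).
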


The 

\appendix

\section{Proof of Theorem \ref{thm1}}\label{sectionlocal}
For each $K \in \Oh$ we can find a cut-off function $\omega=\omega_K \in C^\infty(D_K)$ with the following properties:
\begin{subequations}
\begin{alignat}{6}
\label{eq:om1}
& \omega \equiv 1 \mbox{ in $K$},\\ 
\label{eq:om2}
& \omega\equiv 0 \mbox{ in $\Omega\setminus D_K$},\\
\label{eq:om3}
& \|D^s \omega\|_{L^{\infty}(D_K)} \lesssim h_K^{-s} \mbox{ for $s=0,1,2.$}
\end{alignat}
\end{subequations}  
Note that
\begin{equation}\label{eq:4.1}
\|u\|_{H^{3/2+s}(K)} 
	\le  \|\omega u\|_{H^{3/2+s}(\Om)}
	\le  C (\|-\triangle(\omega u)\|_{H^{-1/2+s}(\Omega)} + \|\nabla (\omega u) \cdot \bn\|_{H^s(\Gamma)}),
\end{equation}
by \eqref{eq:reg}. \revj{Here the constant only depends on $\Omega$}.

Let us first deal with elements $K$ such that $D_K$ does not contain a face in $\partial \Omega$.  In this case $\nabla (\omega u) \cdot \bn\equiv 0$ on $\partial \Omega$. To bound the first term we let $v \in H^{1/2-s}(\Omega)$, and define $m(v)=\frac{1}{|D_K|}\int_{D_K} v$.
Then, we have 
\begin{eqnarray*}
-\int_{\Omega}\triangle(\omega u) v=-\int_{\Omega} \triangle(\omega u) (v-m(v))-\int_{\Omega} \triangle(\omega u) m(v) 
                                   =-\int_{\Omega} (2\nabla\omega \cdot \nabla u+u\triangle \omega) (v-m(v)) 
\end{eqnarray*}
where we used that $\triangle u=0$ and that $\nabla (\omega u) \cdot \bn\equiv 0$ on $\partial \Omega$.  
Therefore
\begin{eqnarray*}
-\int_{\Omega}\triangle(\omega u) v &\le & (\|\nabla\omega\cdot \nabla u\|_{L^2(D_K)} +\|u\triangle \omega\|_{L^2(D_T)}) \|v-m(v)\|_{L^2(D_K)} \\
&\lesssim  & (h_T^{-1/2-s}\|\nabla u\|_{L^2(D_K)} +h_K^{-3/2-s}\|u\|_{L^2(D_K)}) \|v\|_{H^{1/2-s}(D_K)},
\end{eqnarray*}
where we have used \eqref{eq:om3}, the Poincar\'e inequality and an interpolation argument.
Taking the supremum over $v$ we have 
\begin{equation*}
\|-\triangle(\omega u)\|_{H^{-1/2+s}(\Omega)} \le (h_K^{-1/2-s}\|\nabla u\|_{L^2(D_K)} +h_K^{-3/2-s}\|u\|_{L^2(D_K)}). 
\end{equation*}
Hence, in the case $D_K$ does not contain a face in $\partial \Omega$ we have 
\begin{equation*}
\|u\|_{H^{3/2+s}(K)} \lesssim (h_K^{-1/2-s}\|\nabla u\|_{L^2(D_K)} +h_K^{-3/2-s}\|u\|_{L^2(D_K)})
\end{equation*} 
If we replace $u$ with $m(u)$, 
and note that
\[
\|u-m(u)\|_{L^2(D_K)} \lesssim h_K \|\nabla u\|_{L^2(D_K)}
\]
we get
\begin{equation*}
\|\nabla u \|_{H^{1/2+s}(K)} \lesssim h_K^{-1/2-s}\|\nabla u\|_{L^2(D_K)}.
\end{equation*} 

Next we consider the case when $\partial D_K$ contains one or more faces on $\Gamma$. To bound the first term in the right of \eqref{eq:4.1} we get 
\begin{eqnarray*}
\left|\int_{\Omega}\triangle(\omega u) v \right|
&\le & (2\|\nabla \omega  \cdot \nabla u\|_{L^2(D_K)} +\|u\triangle \omega\|_{L^2(D_T)}) \|v\|_{L^2(D_K)} \\
&\lesssim &  (h_K^{-1/2-s}\|\nabla u\|_{L^2(D_K)} +h_K^{-3/2-s}\|u\|_{L^2(D_K)}) h_K^{1/2-s} \|v\|_{H^{1/2-s}(D_K)},
\end{eqnarray*}
where we have used \eqref{eq:om3} and the fact that $v$ vanishes on at least one face of $\partial D_K$, which allows us to use an inequality in the form
\begin{equation}\label{eq:Fr}
\|v\|_{L^2(D_K)} \lesssim h_K^{1/2-s} \|v\|_{H^{1/2-s}(D_K)}. 
\end{equation}

Therefore, as above we have that 
\begin{equation*}
\|-\triangle(\omega u)\|_{H^{-1/2+s}(\Omega)}\lesssim h_K^{-1/2-s}\|\nabla u\|_{L^2(D_K)} +h_K^{-3/2-s}\|u\|_{L^2(D_K)}.
\end{equation*}
To bound the second term on the right of \eqref{eq:4.1} we first use  the product rule and get
\begin{eqnarray*}
 \|\nabla (\omega u) \cdot \bn\|_{H^s(\partial \Omega)} &\le&  \|u \nabla \omega \cdot \bn\|_{H^s(\partial \Omega)}+ \|\omega g\|_{H^s(\Gamma)}\\
& \lesssim & h_K^{-1/2-s}\|u \nabla \omega\|_{L^2(D_T)}+h_K^{1/2-s} \|\nabla (u \nabla \omega)\|_{L^2(D_T)} + \|\omega g\|_{H^s(\Gamma)}\\
&\lesssim &  h_K^{-3/2-s}\|u \|_{L^2(D_T)}+h_K^{-1/2-s} \|\nabla u \|_{L^2(D_T)}+\|\omega g\|_{H^s(\Gamma)},
\end{eqnarray*}
after using a localized version of the trace theorem and \eqref{eq:om3}. By a simple interpolation argument and \eqref{eq:om3}, we have
\begin{equation*}
\|\omega g\|_{H^s(\Gamma)} \lesssim \|g\|_{H^s(\partial D_K \cap \Gamma)} + h_K^{-s} \|g\|_{L^2(\partial D_K\cap \Gamma)}.
\end{equation*}
Combining the above inequalities we have 
\begin{eqnarray*}
\|u\|_{H^{3/2+s}(K)} \lesssim & (h_K^{-1/2-s}\|\nabla u\|_{L^2(D_K)} +h_K^{-3/2-s}\|u\|_{L^2(D_K)})\\
&+(\|g\|_{H^s(D_K \cap \Gamma)} + h_K^{-s} \|g\|_{L^2(D_K\cap \Gamma)}).
\end{eqnarray*}
If we apply the above argument to $u-m(u) $, we obtain our result.

\revj{{\bf Acknowledgements}: The authors would like to thank Norbert Heuer for several useful discussion. Also, we thank  Ralf Hiptmair for bringing  the relevant papers \cite{AlonsoValli, HiptmairMao, HiptmairJerez-HanckesMao} to our attention.}

\bibliographystyle{plain}


\end{document}